\long\def\symbolfootnote[#1]#2{\begingroup%
\def\thefootnote{\fnsymbol{footnote}}\footnote[#1]{#2}\endgroup}
\titleformat{\section}{\large\bfseries\uppercase}{\thesection.}{.5em}{}
\titlespacing*{\section}{0pt}{*3}{*2}
\titleformat{\subsection}{\normalfont\bfseries}{\thesubsection.}{.5em}{}
\titlespacing*{\subsection}{0pt}{*3}{*2}
\titleformat{\subsubsection}{\normalfont\bfseries}{\thesubsubsection.}{.5em}{}
\titlespacing*{\subsubsection}{0pt}{*3}{*2}
\numberwithin{equation}{section}
\DeclarePairedDelimiterX\MeijerM[3]{\lparen}{\rparen}%
{\begin{smallmatrix}#1 \\ #2\end{smallmatrix}\delimsize\vert\,#3}
\newcommand\MeijerG[8][]{%
  G^{\,#2,#3}_{#4,#5}\MeijerM[#1]{#6}{#7}{#8}}
\newcommand\MeijerG*[7]{%
  G^{\,#1,#2}_{#3,#4}\MeijerM*{#5}{#6}{#7}}
\long\def\symbolfootnote[#1]#2{\begingroup%
\def\thefootnote{\fnsymbol{footnote}}\footnote[#1]{#2}\endgroup}
\renewcommand{\Pr}{\mathbb{P}} 
\DeclareMathOperator{\EV}{\mathbb{E}} 
\DeclareMathOperator{\Var}{\mathrm{Var}}
\DeclareMathOperator{\Ei}{Ei}
\DeclareMathOperator{\E1}{E_1}
\newcommand{\T}{T}
\renewcommand{\le}{\leqslant} 
\renewcommand{\ge}{\geqslant}
\newcommand{\abs}[1]{\left\vert#1\right\vert}
\DeclareMathOperator{\One}{\mathchoice{\rm 1\mskip-4.2mu l}{\rm 1\mskip-4.2mu l}{\rm 1\mskip-4.6mu l}{\rm 1\mskip-5.2mu l}}
\newcommand{\indicator}[1]{{\One_{\left\{#1\right\}}}}
\theoremstyle{plain} 
\newtheorem{theorem}{Theorem}[section]
\newtheorem{lemma}{Lemma}[section]
\begin{document}

\title{\textbf{\Large On the Quasi-Stationary Distribution of the Shiryaev--Roberts Diffusion}}

\date{}
\author{}
\maketitle

\begin{center}
\null\vskip-2cm\author{
\textbf{\large Aleksey\ S.\ Polunchenko}\\
Department of Mathematical Sciences, State University of New York at Binghamton,\\Binghamton, New York, USA
}
\end{center}
%
%
%
%
\symbolfootnote[0]{\normalsize\hspace{-0.6cm}Address correspondence to A.\ S.\ Polunchenko, Department of Mathematical Sciences, State University of New York (SUNY) at Binghamton, 4400 Vestal Parkway East, Binghamton, NY 13902--6000, USA; Tel: +1 (607) 777--6906; Fax: +1 (607) 777--2450; E-mail:~\href{mailto:aleksey@binghamton.edu}{aleksey@binghamton.edu}.}\\
%
%
{\small\noindent\textbf{Abstract:} We consider the diffusion $(R_t^r)_{t\ge0}$ generated by the equation $dR_t^r=dt+\mu R_t^r dB_t$ with $R_0^r\triangleq r\ge0$ fixed, and where $\mu\neq0$ is given, and $(B_t)_{t\ge0}$ is standard Brownian motion. We assume that $(R_t^r)_{t\ge0}$ is stopped at $\mathcal{S}_A^r\triangleq\inf\{t\ge0\colon R_t^r=A\}$ with $A>0$ preset, and obtain a closed-from formula for the quasi-stationary distribution of $(R_t^r)_{t\ge0}$, i.e., the limit $Q_A(x)\triangleq\lim_{t\to+\infty}\Pr(R_t^r\le x|\mathcal{S}_A^r>t)$, $x\in[0,A]$. Further, we also prove $Q_A(x)$ to be unimodal for any $A>0$, and obtain its entire moment series. More importantly, the pair $(\mathcal{S}_A^r,R_t^r)$ with $r\ge0$ and $A>0$ is the well-known Generalized Shiryaev--Roberts change-point detection procedure, and its characteristics for $r\sim Q_A(x)$ are of particular interest, especially when $A>0$ is large. In view of this circumstance we offer an order-three large-$A$ asymptotic approximation of $Q_A(x)$ valid for all $x\in[0,A]$. The approximation is rather accurate even if $A$ is lower than what would be considered ``large'' in practice.
}
\\ \\
%
%
{\small\noindent\textbf{Keywords:} {Shiryaev--Roberts procedure; Quasi-stationary distribution; Quickest change-point detection; Whittaker functions.}
\\ \\
%
%
%
%
%
%
%
%
%
{\small\noindent\textbf{Subject Classifications:} 62L10; 60G40; 60J60.}

\section{Introduction} 
\label{sec:intro}

The general theme of this work is quickest change-point detection. The subject is concerned with the design and analysis of dependable ``watchdog''-type statistical procedures for early detection of unanticipated changes that may (or may not) occur online in the characteristics of a ``live''-monitored process. See, e.g.,~\cite{Shiryaev:Book78}, \cite{Basseville+Nikiforov:Book93}, \cite{Poor+Hadjiliadis:Book09}, \cite{Veeravalli+Banerjee:AP2013}, and~\cite[Part~II]{Tartakovsky+etal:Book2014}. A change-point detection procedure is a stopping time, $\T$, that is adapted to the filtration, $(\mathcal{F}_t)_{t\ge0}$, generated by the observed process, $(X_t)_{t\ge0}$; the interpretation of $\T$ is that it is a rule to stop and declare that the statistical profile of the observed process may have (been) changed. A ``good'' (i.e., optimal or nearly optimal) detection procedure is one that minimizes (or nearly minimizes) the desired detection delay penalty-function, subject to a constraint on the ``false alarm'' risk. For an overview of the major optimality criteria see, e.g.,~\cite{Tartakovsky+Moustakides:SA10}, \cite{Polunchenko+Tartakovsky:MCAP2012},~\cite{Veeravalli+Banerjee:AP2013}, and~\cite[Part~II]{Tartakovsky+etal:Book2014}.

This work is motivated by the classical minimax change-point detection problem where the observed process, $(X_t)_{t\ge0}$, is standard Brownian motion that at an unknown (nonrandom) time moment $\nu\ge0$---referred to as the change-point---may (or may not) experience an abrupt and permanent change in the drift, from a value of zero initially, i.e., $\EV[X_t]=0$ for $t\in[0,\nu]$, to a known value $\mu\neq0$ following the change-point, i.e., $\EV[X_t]=\mu t$ for $t>\nu$. The goal is to find out---as quickly as is possible within an {\it a~priori} set level of the ``false positive'' risk---whether the drift of the process is no longer zero. See, e.g.,~\cite{Pollak+Siegmund:B85},~\cite{Shiryaev:RMS1996,Shiryaev:Bachelier2002},~\cite{Moustakides:AS2004},~\cite{Shiryaev:MathEvents2006},~\cite{Feinberg+Shiryaev:SD2006},~\cite{Burnaev+etal:TPA2009},~\cite[Chapter~5]{Shiryaev:Book2011},~\cite{Polunchenko+Sokolov:MCAP2016} and~\cite{Polunchenko:SA2016}. More formally, under the Brownian motion change-point scenario, the observed process, $(X_t)_{t\ge0}$, is governed by the stochastic differential equation (SDE):
\begin{align}\label{eq:BM-change-point-model}
dX_t
&=
\mu\indicator{t>\nu}dt+dB_t,\;t\ge0,\;\text{with}\;X_0=0,
\end{align}
where $(B_t)_{t\ge0}$ is standard Brownian motion (i.e., $\EV[dB_t]=0$, $\EV[(dB_t)^2]=dt$, and $B_0=0$), $\mu\neq0$ is the {\em known} post-change drift value, and $\nu\in[0,\infty]$ is the unknown (nonrandom) change-point; here and onward, the notation $\nu=0$ ($\nu=\infty$) is to be understood as the case when the drift is in effect {\it ab initio} (or never, respectively). Let $\Pr_{\infty}$ ($\Pr_{0}$) denote the probability measure (distribution law) generated by the observed process, $(X_t)_{t\ge0}$, under the assumption that $\nu=\infty$ ($\nu=0$); note that $\Pr_{\infty}$ is the Wiener measure.

A sensible way to perform change-point detection under model~\eqref{eq:BM-change-point-model} is to use the Generalized Shiryaev--Roberts (GSR) procedure proposed by~\cite{Moustakides+etal:SS11} as a headstarted (hence, more general) version of the classical quasi-Bayesian Shiryaev--Roberts (SR) procedure that emerged from the independent work of Shiryaev~\citeyearpar{Shiryaev:SMD61,Shiryaev:TPA63} and that of Roberts~\citeyearpar{Roberts:T66}. Specifically, tailored to the Brownian motion scenario~\eqref{eq:BM-change-point-model}, the GSR procedure is given by the stopping time:
\begin{align}\label{eq:T-GSR-def}
\mathcal{S}_A^r
&\triangleq
\inf\{t\ge0\colon R_{t}^r=A\}\;\text{such that}\;\inf\{\varnothing\}=\infty,
\end{align}
where $A>0$ is the detection threshold (set in advance in such a way that the ``false positive'' risk is within the desired margin of tolerance), and the GSR statistic $(R_{t}^r)_{t\ge0}$ is the diffusion process that satisfies the stochastic differential equation
\begin{align}\label{eq:Rt_r-def}
dR_t^r
&=
dt+\mu R_t^r dX_t
\;\text{with}\;
R_0^r\triangleq r\ge0,
\end{align}
with $(X_t)_{t\ge0}$ as in~\eqref{eq:BM-change-point-model}; the initial value $r$ is known as the headstart.

The choice to go with the GSR procedure may be justified by the result previously obtained (for the discrete-time analogue of the problem) by~\cite{Tartakovsky+etal:TPA2012} where the GSR procedure with a carefully designed headstart was shown to be nearly minimax in the sense of Pollak~\citeyearpar{Pollak:AS85}; an attempt to generalize this result to the Brownian motion scenario~\eqref{eq:BM-change-point-model} was made, e.g., by~\cite{Burnaev:ARSAIM2009}. The GSR procedure's near Pollak-minimaxity is a strong optimality property known in the literature as {\em order-three} minimaxity because the respective delay cost is minimized up to an additive term that goes to zero together with the ``false alarm'' risk level. Also, it was demonstrated explicitly by~\cite{Tartakovsky+Polunchenko:IWAP10} and by~\cite{Polunchenko+Tartakovsky:AS10} that in two specific (discrete-time) scenarios the GSR procedure (again with a ``finetuned'' headstart) is actually {\em exactly} minimax in Pollak's~\citeyearpar{Pollak:AS85} sense. More importantly, while a {\em general} solution to Pollak's~\citeyearpar{Pollak:AS85} minimax change-point detection problem is still unknown, there is a universal ``recipe'' (also proposed by Pollak~\citeyear{Pollak:AS85}) to achieve strong order-three Pollak-minimaxity, and the main ingredient of the ``recipe'' is the GSR procedure. Specifically, the idea is to start the GSR statistic off a random number sampled from the statistic's so-called quasi-stationary distribution (formally defined below). Pollak~\citeyearpar{Pollak:AS85} proved that, in the discrete-time setup, such a randomized ``tweak'' of the GSR procedure is order-three Pollak-minimax; see also~\cite{Tartakovsky+etal:TPA2012}. The same may well hold true for the Brownian motion scenario~\eqref{eq:BM-change-point-model} too, although, to the best of our knowledge, this question has not yet been investigated in the literature, except in the work of~\cite{Burnaev+etal:TPA2009} where the GSR procedure was shown to be almost Pollak-minimax, but only up to the second order (the delay cost is minimized up to an additive term that goes to a constant as the ``false alarm'' risk level goes to zero). Even though Pollak's~\citeyearpar{Pollak:AS85} idea to randomize the GSR procedure as described above {\em does not} necessarily lead to {\em strict} Pollak-minimaxity (see~\citealt{Tartakovsky+Polunchenko:IWAP10,Polunchenko+Tartakovsky:AS10} for counterexamples), it does allow to achieve order-three minimaxity in the discrete-time setup, and whether or not this is also the case for the continuous-time scenario~\eqref{eq:BM-change-point-model} is a problem that is certainly worthy of consideration. This work is an attempt to make the first step in this direction. Incidentally, the quasi-stationary distribution is also essential for the evaluation of the GSR procedure's minimax performance in the stationary regime, i.e., when $\nu\to+\infty$; see~\cite{Pollak+Siegmund:B85}.

More concretely, the overall aim of this paper is to obtain an exact closed-form formula for the GSR statistic's~\eqref{eq:Rt_r-def} quasi-stationary distribution and an accurate asymptotic approximation thereof for when the GSR procedure's detection threshold $A>0$ is large. Formally, the sought quasi-stationary distribution is defined as
\begin{align}\label{eq:QSD-def}
Q_A(x)
&\triangleq
\lim_{t\to+\infty}\Pr_{\infty}(R_t^r\le x|\mathcal{S}_A^r>t)
\;\text{with}\;
q_A(x)
\triangleq
\frac{d}{dx}Q_A(x),
\end{align}
which is independent of the headstart $R_0^r\triangleq r\ge0$, provided $r\in[0,A]$, i.e., within the support of the density $q_A(x)$. The existence of this distribution for the Brownian motion scenario~\eqref{eq:BM-change-point-model} follows from the results previously obtained in the fundamental work of Mandl~\citeyearpar{Mandl:CMJ1961}; see also, e.g.,~\cite{Cattiaux+etal:AP2009}. While we do get $Q_A(x)$ and $q_A(x)$ expressed analytically and explicitly (see Section~\ref{sec:main}), the obtained formulae involve special functions, and, as a result, a {\em precise} performance analysis of Pollak's~\citeyearpar{Pollak:AS85} randomized GSR procedure is problematic: the calculus involved is prohibitively difficult. The usual way around this is to assume the GSR procedure's detection threshold $A>0$ is large and study the randomized GSR procedure asymptotically as $A\to+\infty$. To that end, it is known (see~\citealt{Pollak+Siegmund:B85,Pollak+Siegmund:JAP1996}) that $Q_A(x)$ does have a limit as $A\to+\infty$, and the limit is the GSR statistic's so-called stationary distribution defined as
\begin{align}\label{eq:ST-cdf-def}
H(x)
&\triangleq
\lim_{t\to+\infty}\Pr_{\infty}(R_t^r\le x)
\;\text{with}\;
h(x)
\triangleq
\frac{d}{dx}H(x),
\;\text{where}\;
x\in[0,+\infty),
\end{align}
which is also independent of the headstart $R_0^r\triangleq r\ge0$; the convergence of $Q_A(x)$ to $H(x)$ as $A\to+\infty$ is pointwise, at all continuity points of $H(x)$. Although the stationary distribution $H(x)$ has already been found and studied in the literature (see, e.g.,~\citealt{Shiryaev:SMD61,Shiryaev:TPA63},~\citealt{Pollak+Siegmund:B85},~\citealt{Feinberg+Shiryaev:SD2006}, or~\citealt{Burnaev+etal:TPA2009}), a ``little oh''-level investigation of the randomized GSR procedure's characteristics requires a more ``fine'' large-$A$ asymptotic approximation of the quasi-stationary distribution itself. To that end, we offer a large-$A$ {\em order-three} expansion of the density $q_A(x)$ valid for all $x\in[0,A]$. The expansion is derived directly from the exact formula for $q_A(x)$, with the aid of the Mellin integral transform, which, if need be, can also be used to expand $q_A(x)$ even further, beyond the third-order term. As an auxiliary result, we prove that the quasi-stationary distribution is unimodal for any $A>0$; it is of note that its limit as $A\to+\infty$, i.e., the stationary distribution~\eqref{eq:ST-cdf-def}, is known to be unimodal as well. We also obtain the quasi-stationary distribution's entire moment series.

The remainder of the paper is organized as follows. Section~\ref{sec:preliminaries} fixes nomenclature and notation, and provides the necessary preliminary background. The main contribution---i.e., the quasi-stationary distribution and its properties---is the subject of Section~\ref{sec:main}. Conclusions and outlook follow in Section~\ref{sec:conclusion}. Appendix offers proofs of certain lemmas.

\section{Preliminaries}
\label{sec:preliminaries}

For notational brevity, we shall henceforth omit the subscript ``$A$'' in ``$Q_A(x)$'' as well as in ``$q_A(x)$'', unless the dependence on $A$ is noteworthy. Also, for technical convenience and without loss of generality, we shall primarily deal with $q(x)$ rather than with $Q(x)$.

It is has already been established in the literature (see, e.g.,~\citealt{Mandl:CMJ1961},~\citealt{Cattiaux+etal:AP2009}, or~\citealt[Equation~(35),~p.~528]{Burnaev+etal:TPA2009}) that $q(x)$, formally defined in~\eqref{eq:QSD-def}, is the solution of a certain boundary-value problem composed of a second-order ordinary differential equation (ODE) considered on the interval $[0,A]$, a pair of boundary conditions, and a normalization constraint. Specifically, the ODE---which we shall henceforth refer to as the {\em master equation}---is of the form
\begin{align}\label{eq:master-eqn}
\dfrac{\mu^2}{2}\dfrac{d^2}{dx^2}\big[x^2\,q(x)\big]
-
\dfrac{d}{dx}\big[q(x)\big]
&=
\lambda\,q(x),
\;\;x\in[0,A],
\end{align}
where $\lambda^{(0)}$ is the {\em dominant} eigenvalue of the differential operator
\begin{align}\label{eq:Doperator-def}
\mathscr{D}
&\triangleq
\dfrac{\mu^2}{2}\dfrac{\partial^2}{\partial x^2} x^2
-
\dfrac{\partial}{\partial x},
\end{align}
i.e., the infinitesimal generator of the GSR diffusion $(R_t^r)_{t\ge0}$ under the $\Pr_{\infty}$ probability measure; observe from~\eqref{eq:BM-change-point-model} and~\eqref{eq:Rt_r-def} that the $\Pr_{\infty}$-differential of the GSR diffusion $(R_t^r)_{t\ge0}$ is $dR_t^r=dt+\mu R_t^r dB_t$ where $R_0^r\triangleq r\in[0,A]$ and $A>0$. It goes without saying that $\lambda$ (or any other eigenvalue of the operator $\mathscr{D}$ for that matter) is dependent on $A$, and, wherever necessary, we shall emphasize this dependence via the notation $\lambda_{A}$.

Next, the behavior of $q(x)$ near the left end-point of the interval $[0,A]$---which is the GSR diffusion's domain---must be such that
\begin{align}\label{eq:bnd-cond-0}
\lim_{x\to0+}\left\{\dfrac{\mu^2}{2}\dfrac{\partial}{\partial x}\big[x^2\,q(x)\big]-q(x)\right\}
&=
0,
\end{align}
which is to say that $x=0$ is, in Feller's~\citeyearpar{Feller:AM1952} classification, an {\em entrance} boundary for the process $(R_t^r)_{t\ge0}$; in ``differential equations speak'', this is a Neumann-type boundary condition. The boundary condition at the other end of the interval $[0,A]$ is of the form
\begin{align}\label{eq:bnd-cond-A}
q(A)
&=
0,
\end{align}
i.e., the density $q(x)$ must vanish at $x=A$, for, by definition~\eqref{eq:T-GSR-def} of the GSR stopping time, the GSR diffusion is ``killed'' or ``absorbed'' once it hits the detection threshold $A>0$; in ``differential equations speak'', this is a Dirichlet-type boundary condition.

Lastly, the density $q(x)$ must also satisfy the obvious normalization constraint
\begin{align}\label{eq:norm-constraint}
\int_{0}^{A}q(x)\,dx
&=
1.
\end{align}

Subject to boundary conditions~\eqref{eq:bnd-cond-0} and~\eqref{eq:bnd-cond-A}, equation~\eqref{eq:master-eqn} is a Sturm--Liouville problem. It is straightforward to see that by virtue of the multiplying factor
\begin{align}\label{eq:speed-measure}
\mathfrak{m}(x)
&\triangleq
\dfrac{2}{\mu^2 x^2}\,e^{-\tfrac{2}{\mu^2 x}}
\end{align}
the equation can be brought to the canonical Sturm--Liouville form
\begin{align}\label{eq:eigen-eqn-two}
\dfrac{\mu^2}{2}\dfrac{d}{dx}\left[x^2\,\mathfrak{m}(x)\,\dfrac{d}{dx}\varphi(x)\right]
&=
\lambda\,\mathfrak{m}(x)\,\varphi(x),
\end{align}
where the unknown function $\varphi(x)$ is such that $q(x)\propto\mathfrak{m}(x)\,\varphi(x)$, i.e., $q(x)$ is a multiple of $\mathfrak{m}(x)\,\varphi(x)$. Hence, our problem effectively is to consider the Sturm--Liouville operator
\begin{align}\label{eq:Goperator-def}
\mathscr{G}
&\triangleq
\dfrac{\mu^2}{2\,\mathfrak{m}(x)}\dfrac{d}{dx}x^2\,\mathfrak{m}(x)\,\dfrac{d}{dx}
\end{align}
with $\mathfrak{m}(x)$ given by~\eqref{eq:speed-measure}, restrict it to the interval $x\in[0,A]$, and recover its dominant eigenvalue $\lambda\equiv\lambda_A$ and the respective eigenfunction $\varphi(x)\equiv\varphi(x,\lambda)$ for which
\begin{align}\label{eq:psi-bdd-conds}
\lim_{x\to0+}\left[x^2\,\mathfrak{m}(x)\,\dfrac{d}{dx}\varphi(x,\lambda)\right]
&=
0
\;\;\text{and}\;\;
\varphi(A,\lambda)
=
0.
\end{align}

The general theory of second-order differential operators or Sturm--Liouville operators (such as our operators $\mathscr{D}$ and $\mathscr{G}$ defined above) is well-developed, and, in particular, the spectral properties of such operators are well-understood. The classical fundamental references on the subject are~\cite{Titchmarsh:Book1962},~\cite{Levitan:Book1950},~\cite{Coddington+Levinson:Book1955},~\cite{Dunford+Schwartz:Book1963}, and~\cite{Levitan+Sargsjan:Book1975}; for applications of the theory to diffusion processes, see, e.g.,~\cite[Section~4.11]{Ito+McKean:Book1974}, and~\cite{Linetsky:HandbookChapter2007} who provides a great overview of the state-of-the-art in the field. We now recall a few results from the general Sturm--Liouville theory that directly apply to our specific Sturm--Liouville problem. These results will be utilized in the sequel, and, conveniently enough, all of them (and much more) can be found in the work of~\cite{Fulton+etal:FIT-TR1999}; see also, e.g.,~\cite{Kent:ZWVG1980}.

We start with~\cite[Theorem~18,~p.~22]{Fulton+etal:FIT-TR1999} which asserts that our Sturm--Liouville problem given by equation~\eqref{eq:eigen-eqn-two} and two boundary conditions~\eqref{eq:psi-bdd-conds} does have a solution $\varphi(x,\lambda)$ defined up to an arbitrary multiplicative factor independent of $x$. Consequently, by invoking the normalization constraint~\eqref{eq:norm-constraint}, one can conclude that the quasi-stationary pdf $q(x)$ does exist and is unique.

Next, we turn to~\cite[Section~7]{Fulton+etal:FIT-TR1999} which introduces ten mutually exclusive categories to classify Sturm--Liouville operators depending on their spectral properties. Our Sturm--Liouville problem belongs to Spectral Category 1 (see~\citealt[p.~22]{Fulton+etal:FIT-TR1999}). This means, among many things, that the spectrum of the operator $\mathscr{D}$ given by~\eqref{eq:Doperator-def} is purely discrete, and is determined entirely by the Dirichlet boundary condition~\eqref{eq:bnd-cond-A}. More concretely, the spectrum is the set of solutions of the equation $\varphi(A,\lambda)=0$ where $\lambda$ is the unknown and $A>0$ is fixed. This equation has countably many distinct zeros, say $\{\lambda^{(i)}\}_{i\ge0}$, and each one of them is simple, and their series for any fixed $0<A<+\infty$ is such that $0<-\lambda^{(0)}<-\lambda^{(1)}<-\lambda^{(2)}<\cdots$ with $-\lim_{i\to+\infty}\lambda^{(i)}=+\infty$. Furthermore, for the dominant eigenvalue $\lambda_A\;(\equiv\lambda^{(0)})$ the equation $\varphi(x,\lambda_A)=0$ with $x$ being the unknown has no zeros inside the interval $x\in(0,A)$. All this translates to the following two results.
\begin{lemma}\label{lem:qA-nozeros}
For any fixed $A>0$ the quasi-stationary pdf $q_A(x)$ has no zeros inside the interval $(0,A)$, i.e., $q_A(x)>0$ for all $x\in(0,A)$.
\end{lemma}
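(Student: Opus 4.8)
The plan is to reduce the assertion to the classical oscillation property of the ground-state eigenfunction of a Sturm--Liouville problem. By the reduction carried out in this section, $q_A(x)=c\,\mathfrak{m}(x)\,\varphi(x,\lambda_A)$ on $[0,A]$ for some constant $c\neq0$, where $\mathfrak{m}(x)$ is the speed density~\eqref{eq:speed-measure} and $\varphi(\cdot,\lambda_A)$ is the eigenfunction of~\eqref{eq:eigen-eqn-two} associated with the dominant eigenvalue $\lambda_A$ and subject to the boundary conditions~\eqref{eq:psi-bdd-conds}. Since $\mathfrak{m}(x)=(2/(\mu^2 x^2))\,e^{-2/(\mu^2 x)}>0$ for every $x>0$, the interior zeros of $q_A$ coincide with those of $\varphi(\cdot,\lambda_A)$, so it suffices to show that $\varphi(\cdot,\lambda_A)$ has no zero in $(0,A)$. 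Because $q_A$ is continuous and, by the normalization~\eqref{eq:norm-constraint}, strictly positive somewhere on the connected interval $(0,A)$, the absence of zeros then immediately upgrades to $q_A(x)>0$ for all $x\in(0,A)$.

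To establish the nonvanishing of $\varphi(\cdot,\lambda_A)$ I would first put~\eqref{eq:eigen-eqn-two} into the standard self-adjoint form $-(p\,\varphi')'=\Lambda\,w\,\varphi$ with $p(x)\triangleq(\mu^2/2)\,x^2\,\mathfrak{m}(x)$, $w(x)\triangleq\mathfrak{m}(x)$, and $\Lambda\triangleq-\lambda>0$; here $p$ and $w$ are smooth and strictly positive on $(0,A)$, the endpoint $x=A$ is regular and carries the Dirichlet condition~\eqref{eq:bnd-cond-A}, and $x=0$ is the entrance-type singular endpoint already identified, for which~\eqref{eq:psi-bdd-conds} is the natural separated boundary condition. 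For this configuration the Sturm--Liouville theory recalled in Section~\ref{sec:preliminaries}---Spectral Category~1 of~\cite{Fulton+etal:FIT-TR1999}---provides a complete orthonormal system of eigenfunctions $\{\varphi^{(i)}\}_{i\ge0}$ in $L^2([0,A],w\,dx)$ with eigenvalues $0<-\lambda^{(0)}<-\lambda^{(1)}<\cdots$, together with the companion oscillation statement that $\varphi^{(i)}$ has exactly $i$ zeros in $(0,A)$; taking $i=0$ finishes the argument. Alternatively, one can bypass the oscillation theorem by a variational argument: $\varphi^{(0)}=\varphi(\cdot,\lambda_A)$ minimizes the Rayleigh quotient $\int_0^A p\,(\varphi')^2\,dx$ over $\int_0^A w\,\varphi^2\,dx$ on the form domain, so if $\varphi^{(0)}$ changed sign at some $x_0\in(0,A)$ then $\lvert\varphi^{(0)}\rvert$ would be a minimizer as well, hence an eigenfunction for the same eigenvalue, hence of class $C^2$ on $(0,A)$---which is impossible at a simple zero of $\varphi^{(0)}$ (a corner of $\lvert\varphi^{(0)}\rvert$)---while at a non-simple zero $\varphi^{(0)}$ itself would vanish identically by uniqueness for the linear second-order ODE, contradicting that eigenfunctions are nontrivial.

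The only genuine delicacy here is the singular left endpoint $x=0$, where both $w(x)=\mathfrak{m}(x)$ and $p(x)$ degenerate: the textbook oscillation theorem is usually stated for regular problems, and one needs its extension to singular endpoints of limit-circle/limit-point type. That extension, however, is exactly what the Spectral-Category-1 classification of~\cite{Fulton+etal:FIT-TR1999} packages for the problem at hand, so essentially all of the work is already contained in Section~\ref{sec:preliminaries}; what remains for the lemma is the elementary observation that multiplying the ground-state eigenfunction by the strictly positive speed density $\mathfrak{m}$ can neither create nor remove interior zeros.
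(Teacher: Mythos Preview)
Your proposal is correct and follows essentially the same route as the paper: the paper does not give a separate proof of this lemma but simply records it as a direct consequence of the Spectral Category~1 classification of \cite{Fulton+etal:FIT-TR1999}, which asserts that the dominant eigenfunction $\varphi(\cdot,\lambda_A)$ has no zeros in $(0,A)$; since $q_A\propto\mathfrak{m}\,\varphi(\cdot,\lambda_A)$ with $\mathfrak{m}>0$ on $(0,A)$, the conclusion is immediate. Your write-up is a faithful and more detailed unpacking of exactly this argument, and the additional variational alternative you sketch is a nice self-contained backup but is not needed given the cited reference.
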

\begin{lemma}\label{lem:Dop-eigval-neg}
For any fixed $A>0$ the dominant eigenvalue $\lambda\equiv\lambda_A$ of the operator $\mathscr{D}$ given by~\eqref{eq:Doperator-def} is nonpositive, i.e., $\lambda_A\le0$.
\end{lemma}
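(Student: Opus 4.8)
The plan is to derive an explicit closed-form expression for $\lambda_A$ by integrating the master equation~\eqref{eq:master-eqn} over the entire interval $[0,A]$, and then to read off the sign using the positivity of $q_A(x)$ already granted by Lemma~\ref{lem:qA-nozeros}. First I would observe that~\eqref{eq:master-eqn} is in divergence form, namely
\[
\frac{d}{dx}\left[\frac{\mu^2}{2}\frac{d}{dx}\left(x^2\,q_A(x)\right)-q_A(x)\right]=\lambda_A\,q_A(x),\quad x\in[0,A],
\]
so that integrating both sides from $0$ to $A$ turns the right-hand side into $\lambda_A$ by the normalization constraint~\eqref{eq:norm-constraint}, while the left-hand side telescopes to the difference of the bracketed flux term evaluated at $x=A$ and at $x=0$.

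Next I would evaluate the two endpoint contributions. At the left endpoint the bracketed quantity vanishes as $x\to0+$, which is precisely the entrance boundary condition~\eqref{eq:bnd-cond-0}. At the right endpoint I would expand $\tfrac{d}{dx}\!\left(x^2 q_A(x)\right)=2x\,q_A(x)+x^2\,q_A'(x)$ and invoke the Dirichlet condition~\eqref{eq:bnd-cond-A}, i.e.\ $q_A(A)=0$, which collapses the flux term to $\tfrac{\mu^2}{2}A^2\,q_A'(A)$. This yields the identity
\[
\lambda_A=\frac{\mu^2}{2}\,A^2\,q_A'(A).
\]

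Finally I would determine the sign of $q_A'(A)$. By Lemma~\ref{lem:qA-nozeros}, $q_A(x)>0$ for all $x\in(0,A)$, whereas $q_A(A)=0$; hence for $x<A$ the difference quotient $\left(q_A(x)-q_A(A)\right)/(x-A)=q_A(x)/(x-A)$ is strictly negative, and letting $x\to A-$ gives $q_A'(A)\le0$. Since $\mu^2 A^2>0$, the displayed identity forces $\lambda_A\le0$, as claimed. One can in fact sharpen this to $\lambda_A<0$: if $q_A'(A)$ were zero, then together with $q_A(A)=0$ the equation~\eqref{eq:master-eqn}, which is a regular second-order ODE at the point $x=A>0$, would force $q_A\equiv0$ by uniqueness, contradicting $\int_0^A q_A(x)\,dx=1$. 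I do not expect a substantive obstacle here: the only delicate points are recognizing the divergence structure of~\eqref{eq:master-eqn} so that the integration is transparent, and matching the two boundary terms to conditions~\eqref{eq:bnd-cond-0} and~\eqref{eq:bnd-cond-A}; the sign-determining positivity of $q_A$ on $(0,A)$ is handed to us by Lemma~\ref{lem:qA-nozeros}.
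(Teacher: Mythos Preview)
Your argument is correct. The identity $\lambda_A=\tfrac{\mu^2}{2}A^2 q_A'(A)$ that you derive is exactly the content of Lemma~\ref{lem:QST-addn-properties}\,(\ref{lem:QST-addn-properties:part-C}) in the paper, and your derivation of it is the same integration-of-the-master-equation idea used there; combining it with Lemma~\ref{lem:qA-nozeros} to read off the sign is a clean finish, and your strict-inequality upgrade via ODE uniqueness at the regular point $x=A$ is also sound.

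The paper itself does not prove Lemma~\ref{lem:Dop-eigval-neg} in-text; it simply notes that the claim holds for \emph{every} eigenvalue of $\mathscr{D}$ and refers to standard Sturm--Liouville arguments (e.g.\ Wong~1964, Polunchenko~2016), which proceed by pairing the self-adjoint form~\eqref{eq:eigen-eqn-two} with the eigenfunction and integrating by parts to obtain a nonpositive Rayleigh-quotient expression for $\lambda$. Your route is different in two respects: it is specific to the dominant eigenvalue (because it relies on the positivity of $q_A$ from Lemma~\ref{lem:qA-nozeros}, which only the ground-state eigenfunction enjoys), and it yields the explicit formula $\lambda_A=\tfrac{\mu^2}{2}A^2 q_A'(A)$ rather than just a sign. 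The paper's cited approach buys generality across the whole spectrum; yours buys a concrete identity that the paper in fact establishes separately later, so you have effectively anticipated and merged Lemma~\ref{lem:QST-addn-properties}\,(\ref{lem:QST-addn-properties:part-C}) into the proof of Lemma~\ref{lem:Dop-eigval-neg}. One minor remark: you invoke the entrance boundary condition~\eqref{eq:bnd-cond-0} directly at $x=0$, whereas the paper's later proof of the same identity appeals to the explicit vanishing of $q_A$ and $q_A'$ at $0$ (Lemma~\ref{lem:QST-addn-properties}\,(\ref{lem:QST-addn-properties:part-A}),(\ref{lem:QST-addn-properties:part-B})); since~\eqref{eq:bnd-cond-0} is part of the defining boundary-value problem, your shortcut is legitimate and avoids any dependence on the explicit Whittaker-function formula.
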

\begin{proof}
The claim is actually true for {\em any} eigenvalue of the operator $\mathscr{D}$. For an explicit proof adapted to the context of diffusion processes, see, e.g.,~\cite[Section~1,~pp.~265--266]{Wong:SPMPE1964} or~\cite[Section~3,~pp.~125--126]{Polunchenko:SA2016}.
\end{proof}
It is of note that $\lambda=0$ and $\varphi(x,0)\equiv1$ is always a (nontrivial) solution of~\eqref{eq:eigen-eqn-two}, and $q(x)$ in this case is a multiple of $\mathfrak{m}(x)$ given by~\eqref{eq:speed-measure}. However, while $q(x)\propto\mathfrak{m}(x)$ does (trivially) satisfy the entrance boundary condition~\eqref{eq:bnd-cond-0} for any $A$, the absorbing boundary condition~\eqref{eq:bnd-cond-A} is satisfied only in the limit as $A\to+\infty$. Moreover, since $\mathfrak{m}(x)\ge0$ for all $x\in\mathbb{R}$ and because it integrates to unity over $x\in[0,+\infty)$, the normalization constraint~\eqref{eq:norm-constraint} is automatically fulfilled for $q(x)=\mathfrak{m}(x)$ in the limit as $A\to+\infty$. We therefore arrive at the result first obtained by Shiryaev~\citeyearpar{Shiryaev:SMD61,Shiryaev:TPA63} that $q(x)=\mathfrak{m}(x)$ with $x\in[0,+\infty)$ is the stationary distribution~\eqref{eq:ST-cdf-def} of the GSR statistic, i.e., $h(x)\triangleq\lim_{A\to+\infty}q_A(x)=\mathfrak{m}(x)$, $x\in[0,+\infty)$; see also, e.g.,~\cite{Pollak+Siegmund:B85},~\cite{Feinberg+Shiryaev:SD2006},~\cite[Remark~2,~p.~529]{Burnaev+etal:TPA2009}, or~\cite{Polunchenko+Sokolov:MCAP2016}. This momentless distribution is a special case of the (extreme-value) Fr\'{e}chet distribution.

We conclude this section by recalling that, according to the well-known basic result (see, e.g.,~\citealt[Lemma~1.1.1]{Levitan:Book1950} or~\citealt[Lemma~1.1,~pp.~2--3]{Levitan+Sargsjan:Book1975}) from the general Sturm--Liouville theory, the eigenfunctions $\varphi(x,\lambda)$ indexed by $\lambda$ of the operator $\mathscr{G}$ form an orthonormal basis in the Hilbert space $L^2([0,A],\mathfrak{m})$ of real-valued ``$\mathfrak{m}(x)$''-measurable, square-integrable (with respect to the ``$\mathfrak{m}(x)$''-measure) functions defined on the interval $[0,A]$ equipped with the ``$\mathfrak{m}(x)$''-weighted inner product:
\begin{align*}
\langle f,g\rangle_{\mathfrak{m}}
&\triangleq
\int_{0}^{A}\mathfrak{m}(x)\,f(x)\,g(x)\,dx.
\end{align*}

More specifically, the foregoing means that if $\varphi(x,\lambda^{(i)})$ and $\varphi(x,\lambda^{(j)})$ are any two eigenfunctions of the operator $\mathscr{G}$ given by~\eqref{eq:Goperator-def}, then
\begin{align*}
\int_{0}^{A}\mathfrak{m}(x)\,\varphi(x,\lambda^{(i)})\,\varphi(x,\lambda^{(j)})\,dx
&=
\indicator{i=j},
\end{align*}
where $\varphi(x,\lambda^{(i)})$ and $\varphi(x,\lambda^{(j)})$ are each assumed to be of unit ``length'', i.e., $\|\varphi(\cdot,\lambda^{(i)})\|=1=\|\varphi(\cdot,\lambda^{(j)})\|$, with the ``length'' defined as
\begin{align}\label{eq:eigfun-norm-def}
\|\varphi(\cdot,\lambda)\|^2
&\triangleq
\int_{0}^{A}\mathfrak{m}(x)\,\varphi^2(x,\lambda)\,dx.
\end{align}

We are now in a position to attack the master equation~\eqref{eq:master-eqn} directly. This is the object of the next section, which is the main section of this paper.

\section{The Quasi-stationary Distribution and Its Characteristics}
\label{sec:main}

The plan now is to first fix the detection threshold $A>0$ and solve the master equation~\eqref{eq:master-eqn} analytically to recover $q_A(x)$ and $Q_A(x)$, both in a closed form and foro all $x\in[0,A]$, and then assume that $A$ is large and consider the asymptotic (as $A\to+\infty$) behavior of $q_A(x)$ on its support $x\in[0,A]$. Much of the exact analytical solution part is a build-up to the closely related earlier work of the author~\citeyearpar{Polunchenko:SA2016}; see also~\cite{Linetsky:OR2004} and~\cite{Polunchenko+Sokolov:MCAP2016}.

\subsection{The Exact Formulae and Analysis}

The idea is to apply the change of variables
\begin{align}\label{eq:ux-var-def}
x&\mapsto u=u(x)=\dfrac{2}{\mu^2 x},\;\text{so that}\;u\mapsto x=x(u)=\dfrac{2}{\mu^2 u}\;\text{and}\;\dfrac{dx}{x}=-\dfrac{du}{u},
\end{align}
along with the substitution
\begin{align}\label{eq:eigfun-substitution}
\varphi(x)&\mapsto\varphi(u)\triangleq\dfrac{v(u)}{\sqrt{\mathfrak{m}(u)}}=\sqrt{\dfrac{\mu^2}{2}}\dfrac{1}{u}\,e^{\tfrac{u}{2}}\,v(u)\propto \dfrac{1}{u}\,e^{\tfrac{u}{2}}\,v(u),
\end{align}
to bring our equation to the form
\begin{align}\label{eq:eigenfcn-eqn-whit-form}
v_{uu}(u)+\left\{-\frac{1}{4}+\frac{1}{u}+\frac{1/4-\xi^2/4}{u^2}\right\}v(u)
&=
0,
\end{align}
where
\begin{align}\label{eq:xi-def}
\xi
&\equiv
\xi(\lambda)
\triangleq
\sqrt{1+\dfrac{8}{\mu^2}\lambda}
\;\;\text{so that}\;\;
\lambda
\equiv
\lambda(\xi)
=
\dfrac{\mu^2}{8}(\xi^2-1),
\end{align}
and one may see that $\xi$ is either purely real (ranging between 0 and 1) or purely complex, because $\lambda\le0$, as asserted by Lemma~\ref{lem:Dop-eigval-neg} above. We also remark that equation~\eqref{eq:eigenfcn-eqn-whit-form} is symmetric with respect to the sign of $\xi$.

The obtained equation~\eqref{eq:eigenfcn-eqn-whit-form} is a particular version of the classical Whittaker~\cite{Whittaker:BAMS1904} equation
\begin{align}\label{eq:Whittaker-eqn}
w_{zz}(z)+\left\{-\dfrac{1}{4}+\dfrac{a}{z}+\dfrac{1/4-b^2}{z^2}\right\}w(z)
&=
0,
\end{align}
where $w(z)$ is the unknown function of $z\in\mathbb{C}$, and $a,b\in\mathbb{C}$ are two given parameters; see, e.g.,~\cite[Chapter~I]{Buchholz:Book1969}. A self-adjoint homogeneous second-order ODE, Whittaker's equation~\eqref{eq:Whittaker-eqn} provides a definition for the well-known two Whittaker functions: the latter may be defined as the equation's two fundamental solutions. More concretely, the two Whittaker functions are special functions that are conventionally denoted $W_{a,b}(z)$ and $M_{a,b}(z)$, where the indices $a$ and $b$ are the Whittaker's equation~\eqref{eq:Whittaker-eqn} parameters. The functions are typically considered in the cut plane $|\arg(z)\,|<\pi$ to ensure they are not multi-valued. For an extensive study of these functions and various properties thereof, see, e.g.,~\cite{Buchholz:Book1969} or~\cite{Slater:Book1960}.

By combining~\eqref{eq:Whittaker-eqn},~\eqref{eq:eigenfcn-eqn-whit-form},~\eqref{eq:eigfun-substitution} and~\eqref{eq:ux-var-def} one can see that the {\em general} form of $\varphi(x,\lambda)$ is as follows:
\begin{align}\label{eq:eigfun-gen-form}
\varphi(x,\lambda)
&=
\dfrac{\mu^2 x}{2}\,e^{\tfrac{1}{\mu^2 x}}\left\{C_1M_{1,\tfrac{\xi(\lambda)}{2}}\left(\dfrac{2}{\mu^2 x}\right)+C_2W_{1,\tfrac{\xi(\lambda)}{2}}\left(\dfrac{2}{\mu^2 x}\right)\right\},
\end{align}
where $C_1$ and $C_2$ are arbitrary constants such that $C_1^2+C_2^2\neq0$. The obvious next step is to ``pin down'' the two constants so as to make the obtained general $\varphi(x,\lambda)$ satisfy the boundary conditions~\eqref{eq:psi-bdd-conds} as well as make the pdf $q(x)\propto\mathfrak{m}(x)\,\varphi(x,\lambda)$ where $\mathfrak{m}(x)$ is given by~\eqref{eq:speed-measure} satisfy the normalization constraint~\eqref{eq:norm-constraint}. To that end, it follows from~\cite{Polunchenko:SA2016} that $C_1$ must be set to zero to ensure the pdf $q(x)$ fulfills the entrance boundary condition~\eqref{eq:bnd-cond-0}. Hence, the general form~\eqref{eq:eigfun-gen-form} of $\varphi(x,\lambda)$ simplifies down to
\begin{align}\label{eq:eigfun-gen-form2}
\varphi(x,\lambda)
&=
C\,\dfrac{\mu^2 x}{2}\,e^{\tfrac{1}{\mu^2 x}} W_{1,\tfrac{\xi(\lambda)}{2}}\left(\dfrac{2}{\mu^2 x}\right),
\end{align}
where $C\neq0$ is an arbitrary constant. Assuming for the moment that $\lambda$ is known, the foregoing is sufficient to express $q(x)$ explicitly. Specifically, with the aid of~\cite[Integral~7.623.7,~p.~824]{Gradshteyn+Ryzhik:Book2007} which states that
\begin{align}\label{eq:Whit-int-formula}
\begin{split}
\int_{1}^{+\infty}(x-1)^{c-1}&x^{a-c-1}\, e^{-\tfrac{zx}{2}}\,W_{a,b}(zx)\,dx
=\\
&\qquad=
\Gamma(c)\,e^{-\tfrac{z}{2}}\,W_{a-c,b}(z),
\;\text{provided $\Re(c)>0$ and $\Re(z)>0$},
\end{split}
\end{align}
where here and onward $\Gamma(z)$ denotes the Gamma function (see, e.g.,~\citealt[Chapter~6]{Abramowitz+Stegun:Handbook1964}) one gets
\begin{align*}
\int_{0}^{A}\mathfrak{m}(x)\,\varphi(x,\lambda)\,dx
&=
C\,e^{-\tfrac{1}{\mu^2 A}}\,W_{0,\tfrac{\xi(\lambda)}{2}}\left(\dfrac{2}{\mu^2 A}\right),
\end{align*}
so that subsequently from~\eqref{eq:eigfun-gen-form2} and the fact that $q(x)\propto\mathfrak{m}(x)\,\varphi(x,\lambda)$ where $\mathfrak{m}(x)$ is given by~\eqref{eq:speed-measure} it follows that the function
\begin{align*}
q(x)
&=
\dfrac{\dfrac{1}{x}\,e^{-\tfrac{1}{\mu^2 x}}\,W_{1,\tfrac{\xi(\lambda)}{2}}\left(\dfrac{2}{\mu^2 x}\right)}{e^{-\tfrac{1}{\mu^2 A}}\,W_{0,\tfrac{\xi(\lambda)}{2}}\left(\dfrac{2}{\mu^2 A}\right)}\,\indicator{x\in[0,A]}
\end{align*}
satisfies not only the entrance boundary condition~\eqref{eq:bnd-cond-0} but also the normalization constraint~\eqref{eq:norm-constraint}.

It remains to find $\lambda$, i.e., the dominant eigenvalue of the operator $\mathscr{D}$ given by~\eqref{eq:Doperator-def}. Formally, the problem is to find the largest root $\lambda\le 0$ of the equation $\varphi(A,\lambda)=0$ for a given $A>0$. More concretely, in view of~\eqref{eq:bnd-cond-A},~\eqref{eq:psi-bdd-conds} and~\eqref{eq:eigfun-gen-form2}, the equation to recover $\lambda$ from is
\begin{align}\label{eq:eigval-eqn}
W_{1,\tfrac{\xi(\lambda)}{2}}\left(\dfrac{2}{\mu^2 A}\right)
&=
0,
\end{align}
where $A>0$ is fixed and $\xi(\lambda)$ is as in~\eqref{eq:xi-def}. This equation was previously analyzed by~\cite{Polunchenko:SA2016}, who, in particular, {\em explicitly} showed it to be consistent for any $A>0$, just as predicted by the general Sturm--Liouville theory.

We have now established our first result, formally stated in the following theorem.
\begin{theorem}
For any fixed detection threshold $A>0$, the GSR statistic's quasi-stationary density $q_A(x)$ is given by
\begin{align}\label{eq:QST-pdf-answer}
q_A(x)
&=
\dfrac{\dfrac{1}{x}\,e^{-\tfrac{1}{\mu^2 x}}\,W_{1,\tfrac{\xi(\lambda)}{2}}\left(\dfrac{2}{\mu^2 x}\right)}{e^{-\tfrac{1}{\mu^2 A}}\,W_{0,\tfrac{\xi(\lambda)}{2}}\left(\dfrac{2}{\mu^2 A}\right)}\,\indicator{x\in[0,A]},
\end{align}
where $\lambda$ is the largest (nonpositive) solution of the (surely consistent) equation
\begin{align*}
W_{1,\tfrac{\xi(\lambda)}{2}}\left(\frac{2}{\mu^2 A}\right)
&=
0
\end{align*}
with $\xi(\lambda)$ as in~\eqref{eq:xi-def}. The respective quasi-stationary cdf $Q_A(x)$ is given by
\begin{empheq}[%
    left={%
        Q_A(x)=%
    \empheqlbrace}]{align}\nonumber
&1,\;\text{for $x\ge A$;}\label{eq:QST-cdf-answer}\\
&\dfrac{e^{-\tfrac{1}{\mu^2 x}}\,W_{0,\tfrac{\xi(\lambda)}{2}}\left(\dfrac{2}{\mu^2 x}\right)}{e^{-\tfrac{1}{\mu^2 A}}\,W_{0,\tfrac{\xi(\lambda)}{2}}\left(\dfrac{2}{\mu^2 A}\right)},\;\text{for $x\in[0,A)$;}\\\nonumber
&0,\;\text{otherwise}.
\end{empheq}
\end{theorem}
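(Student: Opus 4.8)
The plan is to solve outright the Sturm--Liouville boundary-value problem that the quasi-stationary density must satisfy --- the master equation~\eqref{eq:master-eqn} together with the entrance condition~\eqref{eq:bnd-cond-0}, the absorbing condition~\eqref{eq:bnd-cond-A}, and the normalization~\eqref{eq:norm-constraint} --- since, by the uniqueness recalled in Section~\ref{sec:preliminaries}, any function fulfilling all four is necessarily $q_A(x)$. First I would apply the change of variable $u=2/(\mu^2 x)$ from~\eqref{eq:ux-var-def} together with the factorization $q(x)\propto\mathfrak{m}(x)\,\varphi(x,\lambda)$ with $\varphi$ as in~\eqref{eq:eigfun-substitution}, reducing~\eqref{eq:master-eqn} to the Whittaker equation~\eqref{eq:eigenfcn-eqn-whit-form} with parameters $a=1$ and $b=\xi(\lambda)/2$, where $\xi$ is as in~\eqref{eq:xi-def}; since Lemma~\ref{lem:Dop-eigval-neg} gives $\lambda\le0$, the index $\xi$ is either real in $[0,1)$ or purely imaginary, and either way~\eqref{eq:eigenfcn-eqn-whit-form} is invariant under $\xi\mapsto-\xi$, so the value of $\varphi$ is unambiguous. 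Undoing the substitutions returns the two-parameter family~\eqref{eq:eigfun-gen-form}.

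Second I would pin down the constants. The entrance condition~\eqref{eq:bnd-cond-0}, which in the $u$-coordinate is a constraint on the flux as $u\to+\infty$, forces $C_1=0$, killing the $M_{1,\xi/2}$ branch; this is precisely the reduction from~\eqref{eq:eigfun-gen-form} to~\eqref{eq:eigfun-gen-form2}, it rests on the large-argument behavior of the two Whittaker functions, and I would cite~\cite{Polunchenko:SA2016} for it rather than reprove it. With $C_1=0$ one gets $q(x)\propto x^{-1}e^{-1/(\mu^2 x)}W_{1,\xi(\lambda)/2}(2/(\mu^2 x))$ on $[0,A]$, and the outstanding multiplicative constant is fixed by~\eqref{eq:norm-constraint}: rewriting $\int_{0}^{A}x^{-1}e^{-1/(\mu^2 x)}W_{1,\xi/2}(2/(\mu^2 x))\,dx$ in the $u$-variable as $\int_{1}^{\infty}s^{-1}e^{-zs/2}W_{1,\xi/2}(zs)\,ds$ with $z=2/(\mu^2 A)$, and matching it to~\eqref{eq:Whit-int-formula} with $c=1$ (the hypotheses $\Re(c)>0$ and $\Re(z)>0$ being trivially met), collapses this integral to $e^{-1/(\mu^2 A)}W_{0,\xi/2}(2/(\mu^2 A))$ and delivers~\eqref{eq:QST-pdf-answer}; non-negativity of the result is Lemma~\ref{lem:qA-nozeros}.

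Third I would identify $\lambda$ and obtain the cdf. The absorbing condition~\eqref{eq:bnd-cond-A}, i.e.\ $\varphi(A,\lambda)=0$, becomes $W_{1,\xi(\lambda)/2}(2/(\mu^2 A))=0$; the Sturm--Liouville facts quoted in Section~\ref{sec:preliminaries} (Spectral Category~1, a simple purely discrete spectrum determined by the Dirichlet condition, together with Lemma~\ref{lem:Dop-eigval-neg}) single out $\lambda$ as the largest, necessarily nonpositive, root, while its existence for every $A>0$ is the consistency statement of~\cite{Polunchenko:SA2016}, again cited rather than reproved. For $Q_A(x)$ I would integrate the density from $0$ to $x$ via the \emph{same} identity~\eqref{eq:Whit-int-formula}, now with the free upper limit $x\in[0,A)$ --- i.e.\ with $z=2/(\mu^2 x)$ in place of $2/(\mu^2 A)$ --- obtaining $\int_{0}^{x}q_A(t)\,dt$ proportional to $e^{-1/(\mu^2 x)}W_{0,\xi/2}(2/(\mu^2 x))$, and division by the normalizer already computed yields the middle branch of~\eqref{eq:QST-cdf-answer}; the branches $Q_A(x)=1$ for $x\ge A$ and $Q_A(x)=0$ for $x<0$ are immediate, and the continuity of $Q_A$ at $x=A$ serves as a built-in check. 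The only genuinely delicate ingredients are the two I would outsource to~\cite{Polunchenko:SA2016} --- the elimination of the $M$-branch by the entrance boundary behavior, which hinges on Whittaker-function asymptotics, and the solvability of~\eqref{eq:eigval-eqn} for all $A>0$ --- everything else being bookkeeping around the single integral~\eqref{eq:Whit-int-formula}.
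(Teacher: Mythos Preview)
Your proposal is correct and follows essentially the same route as the paper: reduce the master equation to Whittaker's equation via~\eqref{eq:ux-var-def}--\eqref{eq:eigfun-substitution}, kill the $M$-branch by the entrance condition (citing~\cite{Polunchenko:SA2016}), fix the normalizer with the integral identity~\eqref{eq:Whit-int-formula} at $c=1$, read off the eigenvalue equation from the absorbing condition, and obtain the cdf by reapplying~\eqref{eq:Whit-int-formula} with the variable upper limit. The only additions you make --- invoking Lemma~\ref{lem:qA-nozeros} for nonnegativity and noting the continuity check at $x=A$ --- are harmless embellishments on an otherwise identical argument.
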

\begin{proof}
We only need to show~\eqref{eq:QST-cdf-answer}. To that end, it suffices to integrate~\eqref{eq:QST-pdf-answer} with respect to $x$ and evaluate the integral explicitly using the definite integral identity~\eqref{eq:Whit-int-formula}.
\end{proof}

As complicated as the obtained formulae~\eqref{eq:QST-pdf-answer} and~\eqref{eq:QST-cdf-answer} may seem, both are all perfectly amenable to numerical evaluation, especially using {\em Mathematica}, which, as was previously demonstrated, e.g., by~\cite{Polunchenko:SA2016} and by~\cite{Linetsky:OR2004,Linetsky:HandbookChapter2007}, can efficiently handle a variety of special functions, including the Whittaker $W$ function. The only ``delicate'' part is the computation of the dominant eigenvalue $\lambda$, which, as can be seen from~\eqref{eq:QST-pdf-answer} and~\eqref{eq:QST-cdf-answer}, is required for both the quasi-stationary pdf $q_A(x)$ as well as the corresponding cdf $Q_A(x)$. To that end, the equation~\eqref{eq:eigval-eqn} that determines $\lambda$ can be easily treated numerically in {\em Mathematica} as well: one can get $\lambda$ computed to within any desirable accuracy. By way of illustration of this point, we were able to get {\em Mathematica} compute $\lambda$ to within as much as 400 (four hundred!) decimal places in about a minute, for each given $A>0$, using an average laptop. Since 400 decimal places is extremely accurate, the corresponding numerically computed values of $\lambda$ can be considered ``exact'', and we shall use this terminology throughout the remainder of the paper. Figures~\ref{fig:qA+QA_A20} were obtained with the aid of our {\em Mathematica} script, and depict the quasi-stationary pdf $q_A(x)$, shown in Figure~\ref{fig:qA_A20}, and the corresponding cdf $Q_A(x)$, shown in Figure~\ref{fig:QA_A20}, as functions of $x\in[0,A]$, assuming $A=20$ and $\mu$ is either $1/2$, $1$, or $3/2$. Note that, in either case, the pdf appears to be unimodal. As a matter of fact, this property of the quasi-stationary distribution will be formally established in Theorem~\ref{thm:qA-unimodal} below.
\begin{figure}[h!]
    \centering
    \begin{subfigure}{0.48\textwidth}
        \centering
        \includegraphics[width=\linewidth]{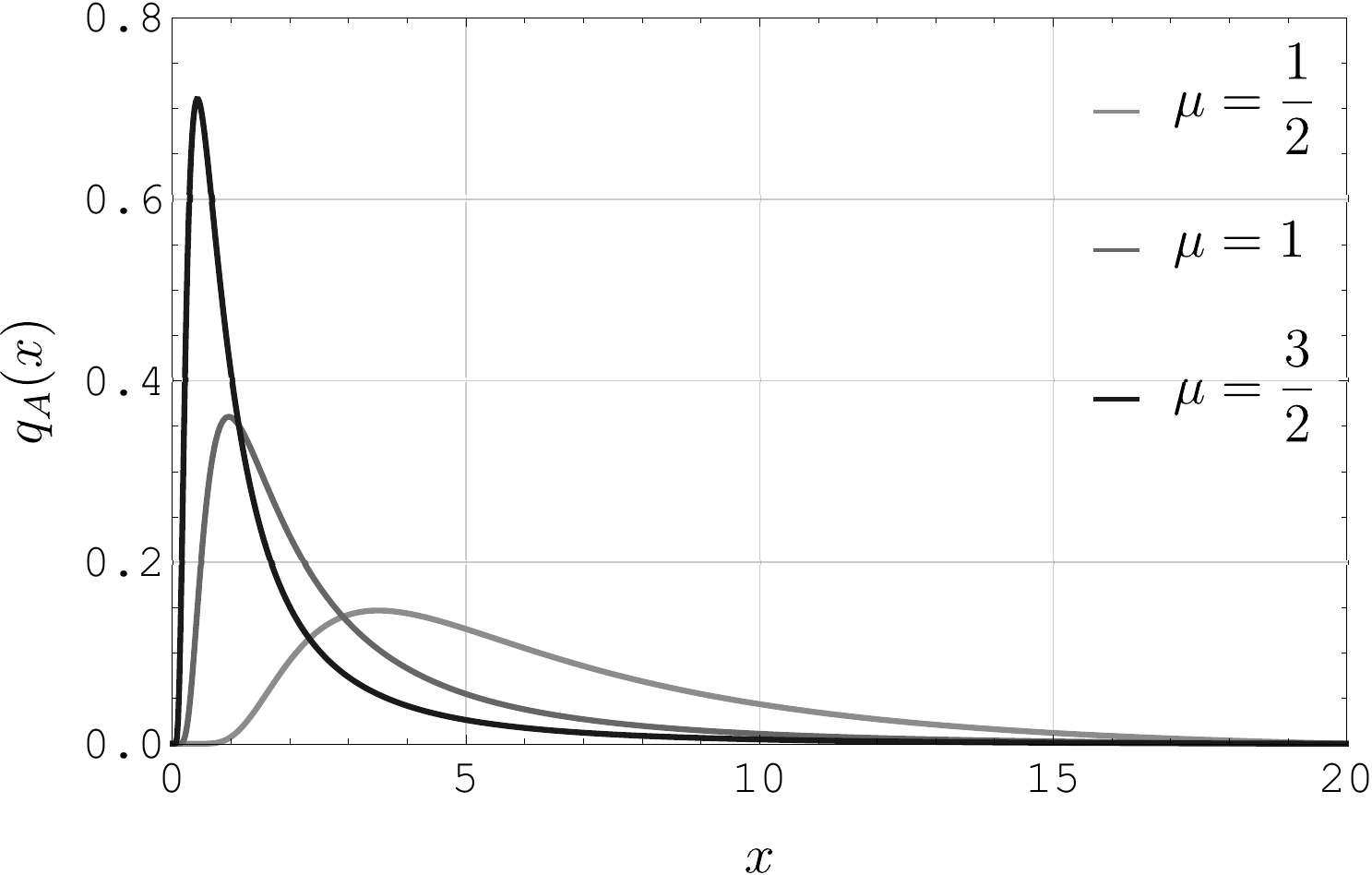}
        \caption{$q_A(x)$.}
        \label{fig:qA_A20}
    \end{subfigure}
    \hspace*{\fill}
    \begin{subfigure}{0.48\textwidth}
        \centering
        \includegraphics[width=\linewidth]{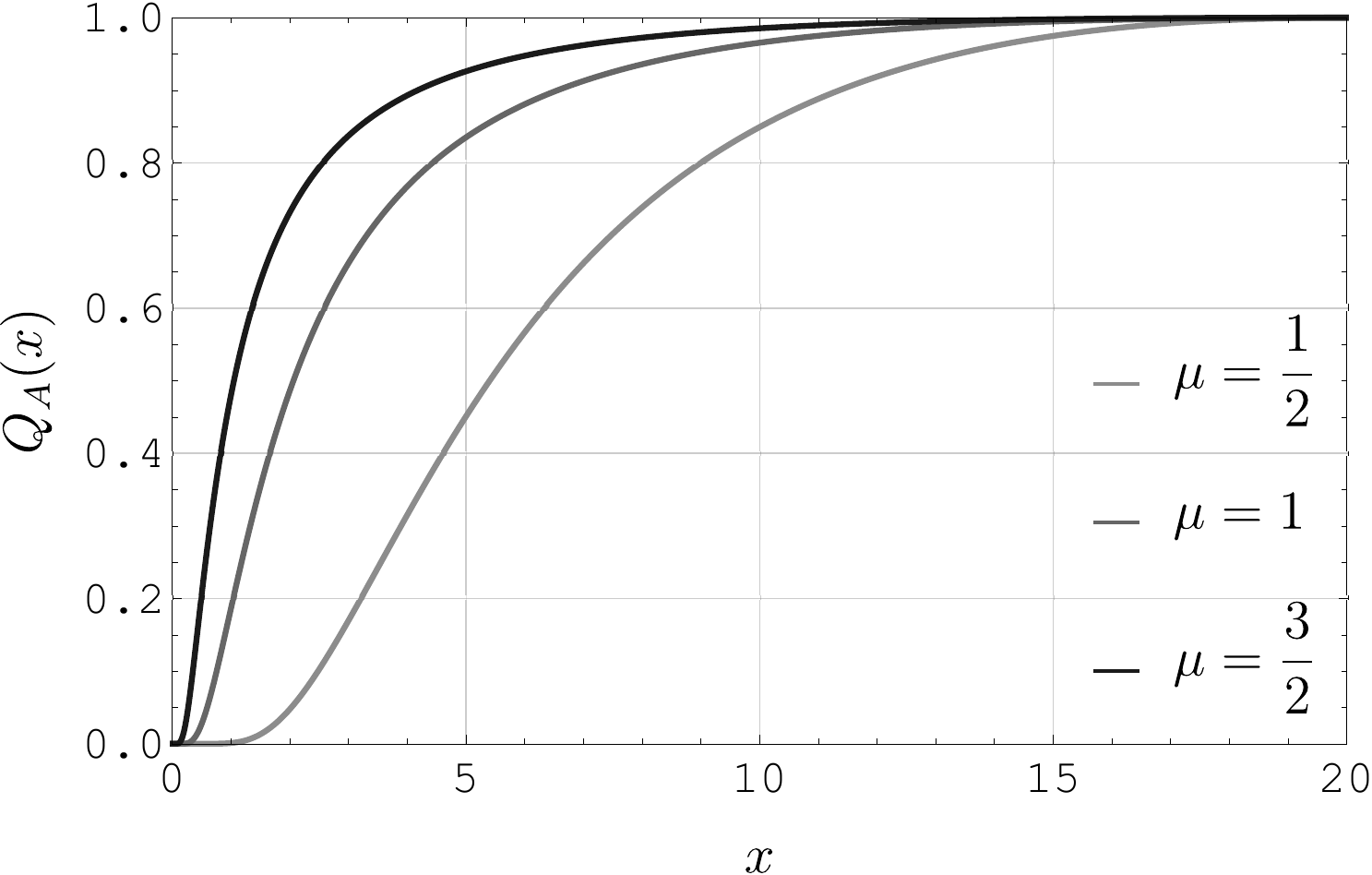}
        \caption{$Q_A(x)$.}
        \label{fig:QA_A20}
    \end{subfigure}
    \caption{Quasi-stationary distribution's pdf $q_A(x)$ and cdf $Q_A(x)$ for $A=20$ and $\mu=\{1/2,1,3/2\}$.}
    \label{fig:qA+QA_A20}
\end{figure}

On the other hand, the ``explicitness'' of formulae~\eqref{eq:QST-pdf-answer} and~\eqref{eq:QST-cdf-answer} allows to study the quasi-stationary distribution in an {\em analytic} manner, as we demonstrate in a series of results established next.

Let us begin with the following auxiliary result.
\begin{lemma}\label{lem:QST-addn-properties}
For any fixed $A>0$ it is true that:
\begin{enumerate}[\itshape(a)]
    \setlength\itemsep{6mm}
    \item $\lim_{x\to0+}q_A(x)=0$; \label{lem:QST-addn-properties:part-A}
    \item $\lim_{x\to0+}\left[\dfrac{\partial}{\partial x}q_A(x)\right]=0$; \label{lem:QST-addn-properties:part-B}
    \item $A^2\dfrac{\mu^2}{2}\left.\left[\dfrac{\partial}{\partial x}q_A(x)\right]\right|_{x=A}=\lambda$. \label{lem:QST-addn-properties:part-C}
\end{enumerate}
\end{lemma}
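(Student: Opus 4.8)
The plan is to exploit the explicit formula \eqref{eq:QST-pdf-answer} for $q_A(x)$ together with the known small-argument (equivalently, large-$x$ after the substitution $u=2/(\mu^2x)$ sends $x\to0+$ to $u\to+\infty$) asymptotics of the Whittaker $W$ function, and, for part~(c), the master equation~\eqref{eq:master-eqn} itself. Throughout I would write $q_A(x) = c_A\,\frac1x\,e^{-\frac1{\mu^2x}}\,W_{1,\xi/2}\!\left(\frac2{\mu^2 x}\right)$ on $[0,A]$, where $c_A = \bigl[e^{-\frac1{\mu^2A}}W_{0,\xi/2}(\frac2{\mu^2A})\bigr]^{-1}$ is a finite nonzero constant (finite because, by Lemma~\ref{lem:qA-nozeros} and the consistency of \eqref{eq:eigval-eqn}, the denominator does not vanish), and $\xi=\xi(\lambda)$ is fixed.

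For parts~(a) and~(b), the key fact is the standard asymptotic $W_{a,b}(z) \sim z^{a}e^{-z/2}$ as $z\to+\infty$ (see, e.g., Buchholz or Slater). Setting $z = 2/(\mu^2 x)$, this gives $W_{1,\xi/2}(2/(\mu^2x)) \sim (2/(\mu^2x))\,e^{-1/(\mu^2x)}$ as $x\to0+$, so that $q_A(x) \sim c_A\,\frac1x\,e^{-1/(\mu^2x)}\cdot\frac{2}{\mu^2x}\,e^{-1/(\mu^2x)} = \frac{2c_A}{\mu^2}\,x^{-2}\,e^{-2/(\mu^2x)}$, and the exponential $e^{-2/(\mu^2x)}$ crushes any polynomial factor as $x\to0+$, giving part~(a). (One may note this leading behavior is exactly consistent with $q_A(x)$ being a multiple of the speed density $\mathfrak m(x)$ near the entrance boundary.) For part~(b) I would differentiate: $\frac{\partial}{\partial x}q_A(x)$ produces, via the product rule, a finite sum of terms each of the form (polynomial in $1/x$)$\times e^{-2/(\mu^2x)}$ — using $W'_{a,b}(z)$ again expands into $W$ functions with shifted indices, each with the same leading exponential — and every such term still vanishes as $x\to0+$ because of the overriding $e^{-2/(\mu^2x)}$ factor. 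The cleanest way to organize part~(b), avoiding messy differentiation of the asymptotic series, is actually to invoke boundary condition~\eqref{eq:bnd-cond-0}: that condition reads $\frac{\mu^2}{2}\frac{d}{dx}[x^2q(x)] - q(x) \to 0$ as $x\to0+$; expanding, $\frac{\mu^2}{2}[x^2 q'(x) + 2x q(x)] - q(x)\to0$; since $q(x)\to0$ (part~(a)) and $x\,q(x)\to0$, this forces $x^2 q'(x)\to0$, which by itself is weaker than (b). So I would fall back on the direct asymptotic estimate after all: differentiate the explicit formula once, use the recurrence/derivative identities for $W_{a,b}$ to keep everything in closed form, and read off that each resulting term is $O(x^{-k}e^{-2/(\mu^2x)}) \to 0$.

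For part~(c), the idea is purely algebraic and does not need asymptotics. Since $q_A$ solves the master equation~\eqref{eq:master-eqn} on $[0,A]$, expand the left side: $\frac{\mu^2}{2}\frac{d^2}{dx^2}[x^2 q] - \frac{d}{dx}q = \frac{\mu^2}{2}\bigl[x^2 q'' + 4x q' + 2q\bigr] - q' = \lambda q$. Now evaluate at $x=A$ and use the Dirichlet condition $q_A(A)=0$ from~\eqref{eq:bnd-cond-A}: the $2q$ and $\lambda q$ and $-q'$... wait, $-q'$ does not vanish. Let me redo: at $x=A$, $q(A)=0$ kills the $2q$ term on the left and the $\lambda q$ term on the right, leaving $\frac{\mu^2}{2}[A^2 q''(A) + 4A q'(A)] - q'(A) = 0$. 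That still has $q''(A)$, which is not what we want. The resolution is to go back to the non-expanded form: write $\frac{\mu^2}{2}\frac{d^2}{dx^2}[x^2 q(x)] - \frac{d}{dx}q(x) = \lambda q(x)$, integrate both sides over $[x,A]$, and use the fundamental theorem of calculus together with $q(A)=0$ and boundary condition~\eqref{eq:bnd-cond-0}. Integrating from $0$ to $A$: $\frac{\mu^2}{2}\bigl[\frac{d}{dx}(x^2q)\bigr]_0^A - [q]_0^A = \lambda\int_0^A q\,dx = \lambda$ by normalization~\eqref{eq:norm-constraint}. The term $\bigl[q\bigr]_0^A = q(A) - q(0+) = 0 - 0 = 0$ by part~(a). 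And $\frac{\mu^2}{2}\frac{d}{dx}[x^2q]\big|_{x=0+} = 0$ is precisely boundary condition~\eqref{eq:bnd-cond-0} combined with $q(0+)=0$ (since $\frac{\mu^2}{2}\frac{d}{dx}[x^2q] = \bigl\{\frac{\mu^2}{2}\frac{d}{dx}[x^2q]-q\bigr\} + q$, and both bracketed pieces tend to $0$). Hence $\frac{\mu^2}{2}\frac{d}{dx}[x^2 q]\big|_{x=A} = \lambda$. Finally $\frac{d}{dx}[x^2 q]\big|_{x=A} = [2xq + x^2q']_{x=A} = A^2 q'(A)$ since $q(A)=0$, yielding $A^2\frac{\mu^2}{2}q_A'(A) = \lambda$, which is part~(c).

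The only genuine obstacle is bookkeeping in part~(b): one must be careful that differentiating $q_A$ and re-expressing the derivative of $W$ via standard contiguous-index identities does not accidentally produce a term whose leading exponential is $e^{-1/(\mu^2x)}$ rather than $e^{-2/(\mu^2x)}$, which would only decay — still to zero — but it is cleaner to confirm all surviving terms carry the product of two such exponentials. Parts~(a) and~(c) are routine: (a) is a one-line asymptotic estimate and (c) is an integration-by-parts argument built entirely from the master equation and the three boundary/normalization conditions already in hand.
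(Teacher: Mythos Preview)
Your proposal is correct and matches the paper's approach essentially line for line: asymptotics of $W_{a,b}(z)\sim z^{a}e^{-z/2}$ for~(a), direct differentiation plus the same asymptotics for~(b), and integration of the master equation over $[0,A]$ for~(c). For the bookkeeping in~(b) that worries you, the paper uses the clean identity $\tfrac{\partial}{\partial z}\bigl[z\,e^{-z/2}W_{1,b}(z)\bigr]=-e^{-z/2}W_{2,b}(z)$ (Slater), which collapses $q_A'(x)$ to a single term proportional to $x^{-2}e^{-1/(\mu^2 x)}W_{2,\xi/2}(2/(\mu^2 x))$ and makes the $e^{-2/(\mu^2 x)}$ decay immediate; your handling of the $x=0$ boundary term in~(c) via the entrance condition~\eqref{eq:bnd-cond-0} plus part~(a) is a perfectly valid variant of the paper's use of parts~(a) and~(b).
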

\begin{proof}
Part~(\ref{lem:QST-addn-properties:part-A}) can be easily deduced from~\eqref{eq:QST-pdf-answer} and
\begin{align*}
W_{1,b}(u)
&=
u\,e^{-\tfrac{u}{2}}\left[1+\mathcal{O}\left(\dfrac{1}{u}\right)\right]\;\text{as}\;u\to+\infty,\;\text{for any $b\in\mathbb{C}$},
\end{align*}
which is a special case of the more general asymptotic property of the Whittaker $W$ function
\begin{align}\label{eq:Whit-fnc-asym-zinf}
W_{a,b}(z)
&=
z^a\,e^{-\tfrac{z}{2}}\left[1+\mathcal{O}\left(\dfrac{1}{z}\right)\right]\;\text{as}\;|z|\to+\infty,\;\text{for any $b\in\mathbb{C}$},
\end{align}
established, e.g., in~\cite[Section~16.3]{Whittaker+Watson:Book1927}.

To prove part~(\ref{lem:QST-addn-properties:part-B}) explicitly, note that due to~\cite[Formula~(2.4.18),~p.~25]{Slater:Book1960} which states that
\begin{align*}
\dfrac{\partial^n}{\partial z^n}\left[z^{a+n-1}e^{-\tfrac{z}{2}}\,W_{a,b}(z)\right]
&=
(-1)^n\,z^{a-1}e^{-\tfrac{z}{2}}\,W_{a+n,b}(z),
\end{align*}
we have
\begin{align*}
\dfrac{\partial}{\partial z}\left[z\,e^{-\tfrac{z}{2}}\,W_{1,b}(z)\right]
&=
-e^{-\tfrac{z}{2}}\,W_{2,b}(z),
\end{align*}
so that by direct differentiation of~\eqref{eq:QST-pdf-answer} with respect to $x$ one can subsequently see that
\begin{align}\label{eq:qA-second-xderiv}
\dfrac{\partial}{\partial x}q_A(x)
&\propto
\dfrac{1}{x^2}\,e^{-\tfrac{1}{\mu^2 x}}\,W_{2,\tfrac{\xi}{2}}\left(\dfrac{2}{\mu^2 x}\right),
\end{align}
and this goes to zero as $x\to0+$ because
\begin{align*}
W_{2,b}(u)
&=
u^2\,e^{-\tfrac{u}{2}}\left[1+\mathcal{O}\left(\dfrac{1}{u}\right)\right]\;\text{as}\;u\to+\infty,\;\text{for any $b\in\mathbb{C}$},
\end{align*}
which is another special case of~\eqref{eq:Whit-fnc-asym-zinf}.

Finally, the assertion of part~(\ref{lem:QST-addn-properties:part-C}) will become apparent if one first integrates the master equation~\eqref{eq:master-eqn} through with respect to $x$ over $[0,A]$, and then invokes parts~(\ref{lem:QST-addn-properties:part-A}) and~(\ref{lem:QST-addn-properties:part-B}) of this lemma, the normalization constraint~\eqref{eq:norm-constraint}, and the absorbing boundary condition~\eqref{eq:bnd-cond-A} to simplify the result of the integration.
\end{proof}

Lemma~\ref{lem:QST-addn-properties} is instrumental in getting the moments of the quasi-stationary distribution.
\begin{theorem}\label{thm:QST-moments}
Let $A>0$ be fixed, and suppose that $Z$ is a $Q_A$-distributed random variable. Then the moments $M_n\triangleq\EV[Z^n]$, $n\ge0$, satisfy the recurrence
\begin{align}\label{eq:QST-moment-eqn}
M_{n}\left[\dfrac{\mu^2}{2}n(n-1)-\lambda\right]+nM_{n-1}
&=
-\lambda\,A^n,\;\;n\ge1,
\end{align}
with $M_0\equiv1$.
\end{theorem}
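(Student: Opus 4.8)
The plan is to derive the recurrence directly from the master equation~\eqref{eq:master-eqn} by multiplying through by $x^n$ and integrating over $[0,A]$. Writing $M_n=\int_0^A x^n q_A(x)\,dx$ (which makes sense since $q_A$ is supported on $[0,A]$), the right-hand side integrates to $\lambda M_n$ immediately. For the left-hand side I would handle the two terms separately, using integration by parts twice on the term $\int_0^A x^n\frac{d^2}{dx^2}[x^2 q_A(x)]\,dx$ and once on $\int_0^A x^n \frac{d}{dx}[q_A(x)]\,dx$. The boundary terms that arise must be controlled: at $x=0$ they vanish by parts~(\ref{lem:QST-addn-properties:part-A}) and~(\ref{lem:QST-addn-properties:part-B}) of Lemma~\ref{lem:QST-addn-properties} (which give $q_A(0+)=0$ and $q_A'(0+)=0$), together with the obvious fact that positive powers of $x$ kill anything bounded at the origin; at $x=A$ they involve $q_A(A)=0$ (the absorbing boundary condition~\eqref{eq:bnd-cond-A}) and $q_A'(A)$, and the latter is pinned down by part~(\ref{lem:QST-addn-properties:part-C}) of Lemma~\ref{lem:QST-addn-properties}, namely $A^2\tfrac{\mu^2}{2}q_A'(A)=\lambda$.

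Carrying this out: for the first term, two integrations by parts give
\begin{align*}
\int_0^A x^n\,\frac{d^2}{dx^2}\big[x^2 q_A(x)\big]\,dx
&=
\Big[x^n\,\tfrac{d}{dx}\big(x^2 q_A(x)\big)-n x^{n-1}\,x^2 q_A(x)\Big]_0^A
+\int_0^A n(n-1)x^{n-2}\,x^2 q_A(x)\,dx.
\end{align*}
The second boundary contribution $-nx^{n+1}q_A(x)$ vanishes at both ends; the first one, $x^n\frac{d}{dx}(x^2q_A(x)) = x^n(2xq_A(x)+x^2q_A'(x))$, vanishes at $x=0$ and equals $A^{n+2}q_A'(A)$ at $x=A$ (using $q_A(A)=0$). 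The remaining integral is $n(n-1)M_n$. Multiplying by $\tfrac{\mu^2}{2}$ turns the boundary piece into $\tfrac{\mu^2}{2}A^{n+2}q_A'(A)=A^n\lambda$ by Lemma~\ref{lem:QST-addn-properties}(\ref{lem:QST-addn-properties:part-C}). For the second term, one integration by parts gives $\int_0^A x^n q_A'(x)\,dx = [x^n q_A(x)]_0^A - n\int_0^A x^{n-1}q_A(x)\,dx = -nM_{n-1}$, again using $q_A(A)=0$ and $q_A(0+)=0$ for $n\ge1$. Assembling everything yields
\begin{align*}
\frac{\mu^2}{2}\big(n(n-1)M_n + \lambda A^n\big) + n M_{n-1}\cdot(-1)\cdot(-1) &= \lambda M_n,
\end{align*}
wait—more carefully, the assembled identity is $\tfrac{\mu^2}{2}n(n-1)M_n + \lambda A^n + nM_{n-1} = \lambda M_n$, which rearranges precisely to $M_n\big[\tfrac{\mu^2}{2}n(n-1)-\lambda\big]+nM_{n-1} = -\lambda A^n$, the claimed recurrence. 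The case $M_0=1$ is just the normalization constraint~\eqref{eq:norm-constraint}.

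The only genuine subtlety—hence the step I would be most careful about—is justifying that all the boundary terms at $x=0$ really do vanish, since $q_A(x)$ is expressed through Whittaker functions whose behavior near $x=0$ corresponds to the argument $\tfrac{2}{\mu^2 x}\to+\infty$. Lemma~\ref{lem:QST-addn-properties} supplies $q_A(0+)=0$ and $q_A'(0+)=0$, and the asymptotic~\eqref{eq:Whit-fnc-asym-zinf} shows that $x^2 q_A(x)$ and its first derivative also decay at the origin (indeed $x^2q_A(x)\to 0$ and $\tfrac{d}{dx}[x^2q_A(x)]\to 0$), so every boundary product of the form $x^n(\cdots)$ with $n\ge1$ vanishes; the $n=1$ case is the tightest and is still covered. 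A minor accompanying point is that $M_n<\infty$ for every $n$, which is immediate because the support $[0,A]$ is bounded. Everything else is the routine integration-by-parts bookkeeping sketched above.
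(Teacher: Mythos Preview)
Your proposal is correct and follows exactly the approach the paper takes: multiply the master equation~\eqref{eq:master-eqn} by $x^n$, integrate over $[0,A]$, and handle the boundary terms via Lemma~\ref{lem:QST-addn-properties} together with the absorbing condition~\eqref{eq:bnd-cond-A}. The paper states this in one sentence without writing out the intermediate boundary evaluations, but your detailed bookkeeping (including the use of part~(\ref{lem:QST-addn-properties:part-C}) to convert $\tfrac{\mu^2}{2}A^{n+2}q_A'(A)$ into $\lambda A^n$) is exactly what is intended.
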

\begin{proof}
It suffices to multiply the master equation~\eqref{eq:master-eqn} through by $x^n$ and integrate the result with respect to $x$ over the interval $[0,A]$ using integration by parts in conjunction with Lemma~\ref{lem:QST-addn-properties}.
\end{proof}

The recurrence~\eqref{eq:QST-moment-eqn} is essentially a nonhomogeneous first-order difference equation, and although it {\em can} be solved explicitly, the solution is too cumbersome to be of significant practical value. This notwithstanding, it is a simple exercise to infer from Theorem~\ref{thm:QST-moments} that, if $Z$ is a $Q_A$-distributed random variable and $A>0$ is fixed, then $\EV[Z]\triangleq M_1=A+1/\lambda$, and
\begin{align*}
\Var[Z]
&=
M_2-M_1^2
=
-\dfrac{\mu^2(A+1/\lambda)^2+1/\lambda}{\mu^2-\lambda},
\end{align*}
where $\lambda\equiv\lambda_A$ is the largest (nonpositive) zero of equation~\eqref{eq:eigval-eqn}. It is noteworthy that the foregoing formula trivially yields the following double inequality:
\begin{align*}
-\dfrac{1}{A}-\dfrac{1+\sqrt{4\mu^2 A+1}}{2\mu^2 A^2}
\le
\lambda_A
\le
-\dfrac{1}{A}-\dfrac{1-\sqrt{4\mu^2 A+1}}{2\mu^2 A^2}\;(<0),
\end{align*}
which makes it direct to see that $\lambda_A=-1/A+\mathcal{O}(A^{-3/2})$. Although the obtained bounds allow one to estimate $\lambda_A$ for any given $A>0$, and the accuracy improves as $A$ gets higher, in the next subsection we will offer a more accurate (viz. order-three) large-$A$ approximation to $\lambda_A$.

Another interesting property of the quasi-stationary distribution is its unimodality, which we already observed in Figures~\ref{fig:qA+QA_A20} above.
\begin{theorem}\label{thm:qA-unimodal}
For any fixed detection threshold $A>0$, the GSR statistic's quasi-stationary distribution given by~\eqref{eq:QST-pdf-answer} and by~\eqref{eq:QST-cdf-answer} is unimodal.
\end{theorem}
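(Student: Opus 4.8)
The plan is to show that $q_A(x)$ has exactly one critical point in $(0,A)$, and that this critical point is a maximum. The natural vehicle is the master equation~\eqref{eq:master-eqn} itself, rewritten as a first-order statement about $q_A'(x)$. First I would expand the derivative in~\eqref{eq:master-eqn}: carrying out the differentiation of $x^2 q_A(x)$ yields
\begin{align*}
\frac{\mu^2}{2}\Big[x^2 q_A''(x)+4x\,q_A'(x)+2q_A(x)\Big]-q_A'(x)
&=
\lambda\,q_A(x),
\end{align*}
so at any point $x_0\in(0,A)$ where $q_A'(x_0)=0$ we get
\begin{align*}
\frac{\mu^2}{2}x_0^2\,q_A''(x_0)
&=
\big(\lambda-\mu^2\big)\,q_A(x_0).
\end{align*}
Since $\lambda\le 0$ by Lemma~\ref{lem:Dop-eigval-neg}, we have $\lambda-\mu^2<0$, and since $q_A(x_0)>0$ by Lemma~\ref{lem:qA-nozeros}, it follows that $q_A''(x_0)<0$ at every interior critical point. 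Hence every interior critical point of $q_A$ is a strict local maximum.

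From here the argument is purely topological. A continuous, differentiable function on $(0,A)$ whose every critical point is a strict local maximum can have at most one critical point: if there were two, say $x_1<x_2$, then between them $q_A$ would have to attain an interior minimum (being continuous on $[x_1,x_2]$ and having maxima at both ends), and that minimum would be a critical point that is not a local maximum — a contradiction. So $q_A$ has at most one critical point in $(0,A)$. To see that it has exactly one, I would use the boundary behavior established in Lemma~\ref{lem:QST-addn-properties}: part~(\ref{lem:QST-addn-properties:part-A}) gives $q_A(0+)=0$, while $q_A(x)>0$ on $(0,A)$ and $q_A(A)=0$ by~\eqref{eq:bnd-cond-A}; thus $q_A$ rises from $0$, stays positive, and returns to $0$, so by Rolle's theorem (or simply by attaining its positive maximum at an interior point) it has at least one critical point in $(0,A)$. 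Combining the two bounds, $q_A$ has precisely one critical point, which is its unique mode, and $q_A$ is increasing to the left of it and decreasing to the right — i.e., $Q_A$ is unimodal.

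I do not expect a serious obstacle here; the only point requiring a little care is the topological step, and one should make sure the local-maximum dichotomy is applied on the open interval where $q_A>0$ so that Lemma~\ref{lem:qA-nozeros} is in force (the sign of $q_A(x_0)$ is exactly what drives $q_A''(x_0)<0$). One could alternatively phrase the same idea via the substitution $q_A(x)\propto \mathfrak{m}(x)\varphi(x,\lambda)$ and count sign changes of $\varphi$ and its derivative using the Sturm oscillation theory already invoked for Lemma~\ref{lem:qA-nozeros}, but the direct computation from the master equation is cleaner and self-contained, so that is the route I would take.
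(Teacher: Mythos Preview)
Your proof is correct, and the first half---showing via the master equation that every interior critical point satisfies $q_A''(x_0)<0$---is exactly what the paper does. The uniqueness step, however, is handled differently. The paper identifies the critical points as the zeros of $W_{2,\xi(\lambda)/2}(2/(\mu^2 x))$ via~\eqref{eq:qA-second-xderiv} and then invokes Milne's interlacing theorem for Whittaker functions (between successive zeros of $W_{k,b}$ there is exactly one zero of $W_{k+1,b}$), applied with $k=1$ and the ``successive'' zeros $2/(\mu^2 A)$ and $+\infty$, to conclude there is exactly one critical point. Your route---``every critical point is a strict local maximum, hence there can be at most one, and boundary values force at least one''---is purely elementary calculus and avoids any special-function machinery. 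What the paper's approach buys is an explicit characterization of the mode as the zero of a specific Whittaker function; what yours buys is self-containment and no dependence on Milne's result (which, as the paper itself notes, requires some care about the hypotheses on the indices). Your closing remark about Sturm oscillation theory is in fact closer in spirit to what the paper actually does than the route you chose.
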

\begin{proof}
The problem is effectively to show that for any given $A>0$ the equation
\begin{align}\label{eq:qA-deriv-zero}
\dfrac{d}{dx}q(x)
&=
0
\;\;\text{with}\;\;
x\in(0,A)
\end{align}
has only one solution, say $\tilde{x}\triangleq\tilde{x}_{A}$, and that this solution is such that the concavity condition
\begin{align}\label{eq:qA-concav-cond}
\left.\left[\dfrac{d^2}{dx^2}q(x)\right]\right|_{x=\tilde{x}}
&<
0
\end{align}
is satisfied. We point out that by part~(\ref{lem:QST-addn-properties:part-B}) of Lemma~\ref{lem:QST-addn-properties} the point $x=0$ is in the solution set of equation~\eqref{eq:qA-deriv-zero}. However, it cannot be a maximum of $q(x)$, for $\lim_{x\to0+}q(x)=0$, as was asserted by part~(\ref{lem:QST-addn-properties:part-A}) of Lemma~\ref{lem:QST-addn-properties}. Hence, the interval in~\eqref{eq:qA-deriv-zero} does not include the left end-point. The right end-point is not included because $q(A)=0$ by design.

Let us first show that the concavity condition~\eqref{eq:qA-concav-cond} is satisfied automatically for {\em any} solution $\tilde{x}\in(0,A)$ of equation~\eqref{eq:qA-deriv-zero}. To that end, note that the master equation~\eqref{eq:master-eqn} considered at any such $\tilde{x}$ can be rewritten in the form
\begin{align*}
\mu^2\tilde{x}^2\left.\left[\dfrac{d^2}{d x^2} q(x)\right]\right|_{x=\tilde{x}}
&=
2\,(\lambda-\mu^2)\,q(\tilde{x}),
\end{align*}
whence, in view of Lemmas~\ref{lem:qA-nozeros} and~\ref{lem:Dop-eigval-neg}, the desired conclusion readily follows. Put another way, whatever extrema $\tilde{x}$ the function $q(x)$ may have inside the interval $(0,A)$, they must all be the function's maxima.

It remains to show that $q(x)$ has exactly one extremum inside the interval $(0,A)$. To that end, it can be gleaned from~\eqref{eq:qA-second-xderiv} that the extrema of the function $q(x)$ are the solutions of the equation
\begin{align*}
W_{2,\tfrac{\xi(\lambda)}{2}}\left(\dfrac{2}{\mu^2 x}\right)
&=
0,
\;\;\text{such that}\;\; x\in(0,A),
\end{align*}
where $\xi(\lambda)$ is given by~\eqref{eq:xi-def}, and $\lambda\leq0$ is the largest solution of equation~\eqref{eq:eigval-eqn}. At a slightly more abstract level, the question effectively is whether the equation $W_{2,b}(z)=0$ with $b$ fixed has exactly one (finite) solution, say $\tilde{z}$, that is to the right of the largest (but finite) positive zero of the function $W_{1,b}(z)$. This equation can be elegantly answered in the affirmative with the aid of the work of Milne~\citeyearpar{Milne:PEMS1914}; see also~\cite[Theorem~V,~p.~504]{Sharma:JS1938}. Specifically, Milne~\citeyearpar{Milne:PEMS1914} showed that if $z_1$ and $z_2$ are any two (finite) successive zeros of the function $W_{k,b}(z)$ with the indices $k$ and $b$ fixed, then the function $W_{k+1,b}(z)$ has exactly one zero located between $z_1$ and $z_2$. With regard to the assumptions that $k$ and $b$ are to satisfy in order for the claim to hold true, while they are not explicitly stated in the paper, it is clear from Milne's~\citeyearpar{Milne:PEMS1914} proof that it certainly ``goes through'' if $k$ is purely real and $b$ is either purely real or purely imaginary. This is sufficient for our purposes, for in our case $k=1$ and $b$ is either purely real or purely imaginary, and the two zeros of $W_{1,b}(z)$ are $z_1=2/(\mu^2 A)$ and $z_2=\infty$. Hence, the equation $W_{2,b}(z)=0$ must have exactly one (finite) solution in between, and the proof is complete.
\end{proof}

We remark that the stationary distribution $h(x)\triangleq\lim_{A\to+\infty} q_A(x)=\mathfrak{m}(x)$ supported on $x\in[0,+\infty)$, and where $\mathfrak{m}(x)$ is given by~\eqref{eq:speed-measure}, is also unimodal: the mode is at $x^{*}=1/\mu^2$. The mode of the quasi-stationary distribution is more difficult to obtain. However, if $A$ is sufficiently large, then the asymptotic approximation of $q_A(x)$ offered in the next subsection can be used to get the mode approximately.

\subsection{Asymptotic Analysis}

We begin with two auxiliary results.
\begin{lemma}
Let $A>0$ be fixed, and suppose that $\{\lambda,\varphi(x,\lambda)\}$ is any eigenvalue-eigenfunction pair of the operator $\mathscr{G}$ given by~\eqref{eq:Goperator-def}; recall that $\varphi(x,\lambda)$ is given by~\eqref{eq:eigfun-gen-form2}. Then
\begin{align}\label{eq:eigfun-norm}
\|\varphi(\cdot,\lambda)\|^2
&=
C^2\,\Biggl[\dfrac{\partial}{\partial\lambda}W_{1,\tfrac{\xi(\lambda)}{2}}\left(\dfrac{2}{\mu^2 A}\right)\Biggr]\left.\Biggl[\dfrac{\partial}{\partial u} W_{1,\tfrac{\xi(\lambda)}{2}}(u)\Biggr]\right|_{u=\tfrac{2}{\mu^2 A}},
\end{align}
where constant $C\neq0$ is the same constant as in the right-hand side of~\eqref{eq:eigfun-gen-form2}, and $\|\varphi(\cdot,\lambda)\|^2$ and $\xi(\lambda)$ are given by~\eqref{eq:eigfun-norm-def} and by~\eqref{eq:xi-def}, respectively.
\end{lemma}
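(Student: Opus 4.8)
The plan is to establish the stated formula for $\|\varphi(\cdot,\lambda)\|^2$ by the classical device of integrating the Sturm--Liouville eigen-equation against a neighboring eigenfunction and passing to a limit. Concretely, for two parameters $\lambda$ and $\mu^{*}$ (the latter not necessarily an eigenvalue), both $\varphi(x,\lambda)$ and $\varphi(x,\mu^{*})$ of the form \eqref{eq:eigfun-gen-form2} satisfy equation \eqref{eq:eigen-eqn-two} with their respective parameters. Multiplying the $\lambda$-equation by $\varphi(x,\mu^{*})$, the $\mu^{*}$-equation by $\varphi(x,\lambda)$, subtracting, and integrating over $[0,A]$ produces, after one integration by parts, the Lagrange/Green identity
\begin{align*}
(\lambda-\mu^{*})\int_{0}^{A}\mathfrak{m}(x)\,\varphi(x,\lambda)\,\varphi(x,\mu^{*})\,dx
&=
\left.\tfrac{\mu^2}{2}\,x^2\,\mathfrak{m}(x)\Bigl[\varphi(x,\mu^{*})\,\varphi_x(x,\lambda)-\varphi(x,\lambda)\,\varphi_x(x,\mu^{*})\Bigr]\right|_{0}^{A}.
\end{align*}
By the left-endpoint behavior \eqref{eq:psi-bdd-conds} (equivalently part~(\ref{lem:QST-addn-properties:part-A})--(\ref{lem:QST-addn-properties:part-B}) of Lemma~\ref{lem:QST-addn-properties} recast through $q\propto\mathfrak{m}\varphi$), the boundary term at $x=0$ vanishes for every admissible parameter value, so only the $x=A$ term survives.

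Next I would divide by $(\lambda-\mu^{*})$ and let $\mu^{*}\to\lambda$. The left-hand side tends to $\|\varphi(\cdot,\lambda)\|^2$ by definition \eqref{eq:eigfun-norm-def}. On the right, since $\varphi(A,\lambda)=0$ at an eigenvalue, the term $\varphi(A,\lambda)\,\varphi_x(A,\mu^{*})$ is identically zero, and what remains is
\begin{align*}
\|\varphi(\cdot,\lambda)\|^2
&=
\tfrac{\mu^2}{2}\,A^2\,\mathfrak{m}(A)\,\varphi_x(A,\lambda)\,\lim_{\mu^{*}\to\lambda}\frac{\varphi(A,\mu^{*})}{\lambda-\mu^{*}}
=
-\tfrac{\mu^2}{2}\,A^2\,\mathfrak{m}(A)\,\varphi_x(A,\lambda)\left.\frac{\partial}{\partial\lambda}\varphi(A,\lambda)\right.,
\end{align*}
using $\varphi(A,\lambda)=0$ to turn the difference quotient into a $\lambda$-derivative. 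This is already the norm in terms of $\varphi$; the last task is to translate $\varphi_x(A,\lambda)$ and $\partial_\lambda\varphi(A,\lambda)$ back into derivatives of the Whittaker $W$ function via the explicit representation \eqref{eq:eigfun-gen-form2}, i.e. $\varphi(x,\lambda)=C\,\tfrac{\mu^2 x}{2}e^{1/(\mu^2 x)}W_{1,\xi(\lambda)/2}(2/(\mu^2 x))$. When we differentiate $\varphi$ with respect to $\lambda$ at $x=A$, the prefactor $C\,\tfrac{\mu^2 A}{2}e^{1/(\mu^2 A)}$ is $\lambda$-free, so $\partial_\lambda\varphi(A,\lambda)=C\,\tfrac{\mu^2 A}{2}e^{1/(\mu^2 A)}\,\partial_\lambda W_{1,\xi(\lambda)/2}(2/(\mu^2 A))$; when we differentiate with respect to $x$ at $x=A$, the terms coming from differentiating the prefactor carry a factor $W_{1,\xi(\lambda)/2}(2/(\mu^2 A))=0$ and drop out, leaving only the chain-rule term $C\,\tfrac{\mu^2 A}{2}e^{1/(\mu^2 A)}\cdot(-2/(\mu^2 A^2))\,\partial_u W_{1,\xi(\lambda)/2}(u)\big|_{u=2/(\mu^2 A)}$. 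Substituting these two expressions, together with $\mathfrak{m}(A)=\tfrac{2}{\mu^2 A^2}e^{-2/(\mu^2 A)}$ from \eqref{eq:speed-measure}, all the $A$-dependent algebraic and exponential factors collapse, the sign works out to $+$, and one is left precisely with \eqref{eq:eigfun-norm}.

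The main obstacle, and the step deserving the most care, is the vanishing of the boundary contribution at $x=0$: one must check that $x^2\,\mathfrak{m}(x)\,\varphi(x,\mu^{*})\,\varphi_x(x,\lambda)$ (and the symmetric term) genuinely tends to $0$ as $x\to0+$ for a parameter $\mu^{*}$ that is \emph{not} an eigenvalue, since the clean boundary conditions \eqref{eq:psi-bdd-conds} were stated only for eigenfunctions. This is handled by noting that $x^2\mathfrak{m}(x)\,\partial_x\varphi(x,\mu^{*})$ has, up to the nonvanishing prefactor in \eqref{eq:eigfun-substitution}, the same $x\to0+$ asymptotics as the expression in part~(\ref{lem:QST-addn-properties:part-B}) of Lemma~\ref{lem:QST-addn-properties} — namely it is governed by $W_{2,b}(u)\sim u^2 e^{-u/2}$ as $u=2/(\mu^2x)\to+\infty$, which forces exponential decay — while $\varphi(x,\lambda)$ itself stays bounded near $x=0$ by the $W_{1,b}$ asymptotics in \eqref{eq:Whit-fnc-asym-zinf}; hence the product vanishes regardless of whether the parameters are eigenvalues. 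A minor secondary point is justifying differentiation under the integral sign / the interchange of the limit $\mu^{*}\to\lambda$ with the integral defining the inner product, which follows from the real-analytic dependence of $\varphi(x,\lambda)$ on $\lambda$ (through $\xi(\lambda)$ and the entire Whittaker function) uniformly on the compact interval $[0,A]$.
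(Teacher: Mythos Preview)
Your approach is correct and is precisely the standard Sturm--Liouville argument one would expect: form the Lagrange/Green identity for two parameter values, use that the $x=0$ boundary term vanishes because the Whittaker-$W$ solution \eqref{eq:eigfun-gen-form2} is built to satisfy the entrance condition for \emph{every} spectral parameter, and then pass to the limit $\mu^{*}\to\lambda$ to convert the difference quotient into $\partial_\lambda\varphi(A,\lambda)$. The paper does not actually prove the lemma here; it simply cites \cite{Polunchenko:SA2016}, and the argument there is this same Lagrange-identity computation. So your route is not ``different'' from the paper's --- it is the omitted proof.

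One small bookkeeping point: if you actually carry out the final substitution you sketched, writing $p(A)=\tfrac{\mu^2}{2}A^2\mathfrak{m}(A)=e^{-2/(\mu^2 A)}$, $\partial_\lambda\varphi(A,\lambda)=C\,u_A^{-1}e^{u_A/2}\,\partial_\lambda W$ and $\varphi_x(A,\lambda)=-C A^{-1}e^{u_A/2}\,\partial_u W$ with $u_A=2/(\mu^2 A)$, the product comes out to
\[
\|\varphi(\cdot,\lambda)\|^2=\dfrac{\mu^2}{2}\,C^2\,\partial_\lambda W_{1,\xi(\lambda)/2}\!\left(\tfrac{2}{\mu^2 A}\right)\,\partial_u W_{1,\xi(\lambda)/2}(u)\Big|_{u=2/(\mu^2 A)},
\]
i.e.\ there is an extra harmless factor of $\mu^2/2$ relative to \eqref{eq:eigfun-norm} as written. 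Since $C$ is an arbitrary nonzero constant and the only downstream use of the lemma (in Lemma~\ref{lem:Dop-eigval-monotonicity}) is that the right-hand side is a positive multiple of $(\partial_u W)^2$, this is immaterial; but your sentence ``all the $A$-dependent algebraic and exponential factors collapse \ldots\ precisely with \eqref{eq:eigfun-norm}'' slightly overstates the cancellation.
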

\begin{proof}
See~\cite{Polunchenko:SA2016}.
\end{proof}

The foregoing lemma enables us to establish the following important monotonicity property of the spectrum of the operator $\mathscr{D}$ given by~\eqref{eq:Doperator-def} with respect to the detection threshold $A$.
\begin{lemma}\label{lem:Dop-eigval-monotonicity}
The dominant eigenvalue $\lambda\equiv\lambda_A$ of the operator $\mathscr{D}$ is a monotonically increasing function of $A>0$. More concretely,
\begin{align*}
\dfrac{d}{d A}\lambda_A
>0
\;\text{for all}\;
A>0.
\end{align*}
\end{lemma}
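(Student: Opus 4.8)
The plan is to view $\lambda_A$ as an implicit function of $A$ defined by the eigenvalue equation~\eqref{eq:eigval-eqn}, differentiate through it, and then observe that the two factors arising in the resulting expression for $d\lambda_A/dA$ are exactly the two factors of the norm formula~\eqref{eq:eigfun-norm}, whose product is manifestly positive.

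First I would set $F(A,\lambda)\triangleq W_{1,\xi(\lambda)/2}(2/(\mu^2A))$, so that the dominant eigenvalue $\lambda_A$ is characterized by $F(A,\lambda_A)=0$, cf.~\eqref{eq:eigval-eqn}. Since $W_{a,b}(z)=W_{a,-b}(z)$, the function $F$ depends on $\lambda$ only through $\xi^2(\lambda)=1+8\lambda/\mu^2$ (see~\eqref{eq:xi-def}), which is real; hence $F$ is real-analytic in both arguments on the relevant ranges, and, the dominant eigenvalue being a \emph{simple} zero of $\lambda\mapsto F(A,\lambda)$ as recalled in Section~\ref{sec:preliminaries}, we have $\partial F/\partial\lambda\neq0$ along the curve. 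The implicit function theorem then gives that $\lambda_A$ is differentiable with
\begin{align*}
\dfrac{d\lambda_A}{dA}
&=
-\dfrac{\partial F/\partial A}{\partial F/\partial\lambda}.
\end{align*}

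Next I would evaluate the numerator by the chain rule: putting $u=2/(\mu^2A)$, one has $du/dA=-2/(\mu^2A^2)$, so
\begin{align*}
\dfrac{\partial}{\partial A}F(A,\lambda)
&=
-\dfrac{2}{\mu^2A^2}\left.\Biggl[\dfrac{\partial}{\partial u}W_{1,\tfrac{\xi(\lambda)}{2}}(u)\Biggr]\right|_{u=\tfrac{2}{\mu^2A}},
\end{align*}
whereas the denominator $\partial F/\partial\lambda=\partial_\lambda W_{1,\xi(\lambda)/2}(2/(\mu^2A))$ is precisely the first bracketed factor in~\eqref{eq:eigfun-norm}. Writing $D_u$ and $D_\lambda$ for the second and first bracketed factors in~\eqref{eq:eigfun-norm}, respectively (both at $\lambda=\lambda_A$), the above becomes
\begin{align*}
\dfrac{d\lambda_A}{dA}
&=
\dfrac{2}{\mu^2A^2}\,\dfrac{D_u}{D_\lambda}
=
\dfrac{2}{\mu^2A^2}\,\dfrac{D_u^2}{D_\lambda D_u}
=
\dfrac{2\,C^2}{\mu^2A^2}\cdot\dfrac{D_u^2}{\|\varphi(\cdot,\lambda_A)\|^2},
\end{align*}
the last equality by~\eqref{eq:eigfun-norm}. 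Since $\|\varphi(\cdot,\lambda_A)\|^2>0$ and $C\neq0$, the only thing left to verify is $D_u\neq0$; but $u\mapsto W_{1,\xi(\lambda_A)/2}(u)$ satisfies a second-order linear ODE in Whittaker form, which carries no first-derivative term, so a double zero there would force this function to vanish identically---impossible. Hence its zero at $u=2/(\mu^2A)$ is simple, $D_u\neq0$, and therefore $d\lambda_A/dA>0$ for every $A>0$.

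The one point requiring care is the legitimacy of the implicit differentiation, i.e.\ that $\lambda_A$ is a genuine single-valued differentiable branch; this rests on the simplicity of the dominant eigenvalue (equivalently $D_\lambda\neq0$, which also follows from $D_uD_\lambda=\|\varphi\|^2/C^2\neq0$ together with $D_u\neq0$) and on the joint analyticity of the Whittaker $W$ function in its index and argument. Everything else is routine bookkeeping, and I would not expect any substantial obstacle.
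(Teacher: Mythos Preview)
Your argument is correct and follows essentially the same route as the paper's own proof: implicit differentiation of the eigenvalue equation~\eqref{eq:eigval-eqn} combined with the norm identity~\eqref{eq:eigfun-norm} to rewrite $d\lambda_A/dA$ as $\dfrac{2}{\mu^2A^2}(C D_u)^2/\|\varphi(\cdot,\lambda)\|^2$. Your treatment is in fact slightly more careful than the paper's, which stops at this expression without explicitly checking $D_u\neq0$; your Whittaker-ODE observation that a double zero would force the solution to vanish identically cleanly closes that gap and secures strict positivity.
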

\begin{proof}
Let us show that the desired result is actually valid for {\em any} eigenvalue of the operator $\mathscr{D}$. Specifically, suppose temporarily that $\lambda$ and $\varphi(x,\lambda)$ are an {\em arbitrary} eigenvalue-eigenfunction pair of $\mathscr{G}$. By implicit differentiation it is then direct to see from~\eqref{eq:eigval-eqn} that
\begin{align*}
\dfrac{d}{d A}\lambda_A
&=
-\dfrac{\dfrac{\partial}{\partial A} W_{1,\tfrac{\xi(\lambda)}{2}}\left(\dfrac{2}{\mu^2 A}\right)}{\dfrac{\partial}{\partial\lambda} W_{1,\tfrac{\xi(\lambda)}{2}}\left(\dfrac{2}{\mu^2 A}\right)}
=
\dfrac{\dfrac{2}{\mu^2 A^2}\left.\Biggl[\dfrac{\partial}{\partial u} W_{1,\tfrac{\xi(\lambda)}{2}}(u)\Biggr]\right|_{u=\tfrac{2}{\mu^2 A}}}{\dfrac{\partial}{\partial \lambda} W_{1,\tfrac{\xi(\lambda)}{2}}\left(\dfrac{2}{\mu^2 A}\right)},
\end{align*}
and because from~\eqref{eq:eigfun-norm} we also have
\begin{align*}
\dfrac{\partial}{\partial\lambda}W_{1,\tfrac{\xi(\lambda)}{2}}\left(\dfrac{2}{\mu^2 A}\right)
&=
\|\varphi(\cdot,\lambda)\|^2\left/
\left(C^2\left.\Biggl[\frac{\partial}{\partial u} W_{1,\tfrac{\xi(\lambda)}{2}}(u)\Biggr]\right|_{u=\tfrac{2}{\mu^2 A}}\right)\right.,
\end{align*}
one can readily conclude that
\begin{align*}
\dfrac{d}{d A}\lambda(A)
&=
\dfrac{2}{\mu^2 A^2}\left(C\left.\Biggl[\dfrac{\partial}{\partial u} W_{1,\tfrac{\xi(\lambda)}{2}}(u)\Biggr]\right|_{u=\tfrac{2}{\mu^2 A}}\right)^2\left/\;\|\varphi(\cdot,\lambda)\|^2\right.,
\end{align*}
which gives the desired (more general) result.
\end{proof}

Lemma~\ref{lem:Dop-eigval-monotonicity} explicitly shows that to say ``large $A$'' is the same as to say ``small $\lambda\equiv\lambda_A$''. This means, in particular, that $\lim_{A\to+\infty}\lambda_A=0$, which makes perfect sense. The large-$A$ behavior of the quasi-stationary distribution can therefore be extracted from the Taylor series of the function $W_{1,\tfrac{\xi(\lambda)}{2}}(x)$ present in the numerator of the fraction in the right-hand side of~\eqref{eq:QST-pdf-answer} expanded with respect to $\lambda$ around zero, assuming that $x\in[0,A]$ is fixed. Specifically, noting from~\eqref{eq:xi-def} that $\xi(0)=1$, the problem is to obtain the expansion
\begin{align}\label{eq:Whit-fcn-taylor-expn}
W_{1,\tfrac{\xi(\lambda)}{2}}(x)
&=
W_{1,\tfrac{1}{2}}(x)
+
\sum_{k=1}^{n}\left.\left[\dfrac{\partial^k}{\partial\lambda^k}W_{1,\tfrac{\xi(\lambda)}{2}}(x)\right]\right|_{\lambda=0}\dfrac{\lambda^k}{k!}
+
\mathcal{O}(\lambda^{n+1}),
\end{align}
up to a desired order $n\ge1$; while the order $n$ can, in principle, be 0, shrinking the foregoing Taylor series to just the first term would not yield a particularly accurate approximation. With regard to the denominator of the fraction in the right-hand of~\eqref{eq:QST-pdf-answer}, it can be safely forgotten about, because
\begin{align*}
\lim_{A\to+\infty}e^{-\tfrac{1}{\mu^2 A}}\,W_{0,\tfrac{\xi(\lambda_A)}{2}}\left(\dfrac{2}{\mu^2 A}\right)
&=
1,
\end{align*}
which follows from the fact that $\lim_{x\to0+}x^x=1$ and the Whittaker $W$ function's small-argument asymptotics
\begin{align*}
W_{0,b}(u)
&\sim
\dfrac{\Gamma(2b)}{\Gamma(b+1/2)} u^{\tfrac{1}{2}-b}\,e^{-\tfrac{u}{2}}\;\text{as}\;u\to0+,\;\text{for any $b\in\mathbb{C}$},
\end{align*}
which, in turn, is a special case of
\begin{align*}
W_{a,b}(u)
&\sim
\dfrac{\Gamma(2b)}{\Gamma(b-a+1/2)} u^{\tfrac{1}{2}-b}\,e^{-\tfrac{u}{2}}\;\text{as}\;u\to0+,\;\text{for any $a,b\in\mathbb{C}$}.
\end{align*}

The first term of the sought series~\eqref{eq:Whit-fcn-taylor-expn} can be worked out from~\cite[Identity~(28a),~p.~23]{Buchholz:Book1969}, according to which
\begin{align*}
W_{a,a-\tfrac{1}{2}}(z)
&=
z^{a}e^{-\tfrac{z}{2}},
\end{align*}
so that
\begin{align*}
W_{1,\tfrac{1}{2}}(z)
&=
z\,e^{-\tfrac{z}{2}},
\end{align*}
and we can proceed to finding the higher-order terms. To that end, the task requires the evaluation of the partial derivatives
\begin{align*}
\left.\left[\dfrac{\partial^{k}}{\partial b^{k}}W_{1,b}(x)\right]\right|_{b=\tfrac{1}{2}},
\end{align*}
for a fixed $x\in[0,A]$ and for all $k$'s starting from 1 and up to the desired order $n\ge1$ of the expansion~\eqref{eq:Whit-fcn-taylor-expn}. We shall now set $n=3$ and find the necessary three derivatives, so as to then obtain the corresponding {\em third-order} expansion. This will yield a sufficiently accurate approximation of the quasi-stationary distribution, and augmenting the expansion with higher-order terms---though {\em is} possible---would be superfluous.

To proceed, let us recall two special functions that will be needed below. The first function we will need is the exponential integral
\begin{empheq}[%
    left={%
        \Ei(x)\triangleq%
    \empheqlbrace}]{align*}
&-\displaystyle\int_{-x}^\infty\dfrac{e^{-y}}{y}\,dy,\;\text{for $x<0$;}\\
&-\lim_{\varepsilon\to+0}\left[\int_{-x}^{-\varepsilon}\dfrac{e^{-y}}{y}\,dy+\int_{\varepsilon}^{\infty}\dfrac{e^{-y}}{y}\,dy\right],\;\text{for $x>0$},
\end{empheq}
whose basic properties are summarized, e.g., in~\cite[Chapter~5]{Abramowitz+Stegun:Handbook1964}. More specifically, we will need the function $\E1(x)\triangleq-\Ei(-x)$, but restricted to positive values of the argument, so that
\begin{align}\label{eq:E1-func-def}
\E1(x)
&\triangleq
\int_{x}^\infty\dfrac{e^{-y}}{y}\,dy
=
x e^{-x}\int_{0}^{+\infty} e^{-xy}\log(1+y)\,dy,\; x>0,
\end{align}
where the second equality is due to~\cite[Integral~4.337.1,~p.~572]{Gradshteyn+Ryzhik:Book2007} which is
\begin{align*}
\int_{0}^{+\infty} e^{-ay}\log(b+y)\,dy
&=
\dfrac{1}{a}\big[\log b-e^{ab}\Ei(-ba)\big],
\;
\abs{\arg b}<\pi,\; \Re(a)>0.
\end{align*}

The second function we will need is the Meijer $G$-function, introduced in the seminal work of Meijer~\citeyearpar{Meijer:NAW1936}. The Meijer $G$-function is defined as the Mellin-Barnes integral
\begin{align}\label{eq:MeijerG-func-def}
\MeijerG*{m}{n}{p}{q}{a_1,\ldots,a_p}{b_1,\ldots,b_q}{z}
&\triangleq
\dfrac{1}{2\pi\imath}\int_{\mathcal{C}}\dfrac{\prod_{k=1}^m \Gamma(b_k-s) \prod_{j=1}^n \Gamma(1-a_j+s)}{\prod_{k=m+1}^q \Gamma(1-b_k+s) \prod_{j=n+1}^p \Gamma(a_j-s)}\,z^s ds,
\end{align}
where $\imath$ denotes the imaginary unit, i.e., $\imath\triangleq\sqrt{-1}$, the integers $m$, $n$, $p$, and $q$ are such that $0\le m\le q$ and $0\le n\le p$, and the contour $\mathcal{C}$ is closed in an appropriate way to ensure the convergence of the integral. It is also required that no $a_j-b_k$ be an integer. The function is a very general function, and includes, as special cases, not only all elementary functions, but a number of special functions as well. An extensive list of special cases of the Meijer $G$-function can be found, e.g., in the classical special functions handbook of~\cite{Prudnikov+etal:Book1990}, which also includes a summary of the function's basic properties. We will need the following particular case of the Meijer $G$-function:
\begin{align}\label{eq:MeijerG-func-spcl-case}
\MeijerG*{3}{1}{2}{3}{0,1}{0,0,0}{x}
&=
\int_{x}^{+\infty}e^{y}\E1(y)\,\dfrac{dy}{y}
=
\int_{0}^{+\infty} e^{-xy}\log(1+y)\,\dfrac{dy}{y},\; x>0;
\end{align}
see Appendix~\ref{sec:apndx:MeijerG-func-spcl-case-proof} for a proof.

We are now in a position to formulate the main result that will subsequently enable us to obtain the expansion~\eqref{eq:Whit-fcn-taylor-expn} explicitly, up to the third order.
\begin{lemma}\label{lem:Whit-fcn-deriv}
For any fixed $x\ge0$ the following identities hold true:
\begin{enumerate}[\itshape(a)]
    \item $\left.\left[\dfrac{\partial}{\partial b}W_{1,b}(x)\right]\right|_{b=\tfrac{1}{2}}=e^{-\tfrac{x}{2}}$; \label{lem:Whit-fcn-deriv:part-A}
    \item $\left.\left[\dfrac{\partial^2}{\partial b^2}W_{1,b}(x)\right]\right|_{b=\tfrac{1}{2}}=2\,e^{-\tfrac{x}{2}}\left\{e^{x}\E1(x)+x\,\MeijerG*{3}{1}{2}{3}{0,1}{0,0,0}{x}\right\}$; \label{lem:Whit-fcn-deriv:part-B}
    \item $\left.\left[\dfrac{\partial^3}{\partial b^3}W_{1,b}(x)\right]\right|_{b=\tfrac{1}{2}}=6\,e^{-\tfrac{x}{2}}\MeijerG*{3}{1}{2}{3}{0,1}{0,0,0}{x}$. \label{lem:Whit-fcn-deriv:part-C}
\end{enumerate}
\end{lemma}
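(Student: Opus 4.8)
The plan is to obtain the three derivatives as solutions of a triangular hierarchy of inhomogeneous second-order linear ODEs. First I would differentiate Whittaker's equation~\eqref{eq:Whittaker-eqn} with $a=1$, i.e.\ $w''+\big(-\tfrac14+\tfrac1x+\tfrac{1/4-b^2}{x^2}\big)w=0$, satisfied by $w=W_{1,b}(x)$, successively in $b$ and then set $b=\tfrac12$. Writing $w_k(x)\triangleq\big[\partial_b^k W_{1,b}(x)\big]_{b=1/2}$ and using that $W_{a,b}(x)$ is entire in $b$ (so the expansion may be differentiated term by term), one finds that each $w_k$ solves $\mathscr{L}[w_k]=g_k$, where $\mathscr{L}[y]\triangleq y''+\big(-\tfrac14+\tfrac1x\big)y$ is the $a=1$, $b=\tfrac12$ Whittaker operator and the forcing terms are built from the lower-order derivatives: $g_1=w_0/x^2$, $g_2=2(w_0+w_1)/x^2$, and $g_3=(3w_2+6w_1)/x^2$, with $w_0=W_{1,1/2}(x)=x e^{-x/2}$ by the Buchholz identity quoted above. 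Thus~(a) is to be solved first, then~(b), then~(c).

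Next I would solve each of these ODEs. Since $\mathscr{L}$ has no first-derivative term, the Wronskian of any two solutions is constant; starting from $y_1(x)=W_{1,1/2}(x)=xe^{-x/2}$ one gets, by reduction of order, a second linearly independent solution $y_2(x)=y_1(x)\int^x dt/y_1(t)^2=-e^{x/2}+x e^{-x/2}\Ei(x)$ with $W[y_1,y_2]=1$; note $y_2$ grows exponentially as $x\to+\infty$. Variation of parameters then yields a particular solution $w_{k,\mathrm p}=-y_1\int y_2 g_k\,dx+y_2\int y_1 g_k\,dx$, and all the antiderivatives that arise are elementary save for two, which are exactly the ``named'' ones already introduced: $\int x^{-1}e^{-x}\,dx=-\E1(x)$ (by differentiating the defining integral~\eqref{eq:E1-func-def}) and $\int x^{-1}e^{x}\E1(x)\,dx=-\MeijerG*{3}{1}{2}{3}{0,1}{0,0,0}{x}$ (immediate from~\eqref{eq:MeijerG-func-spcl-case} by the fundamental theorem of calculus). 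In the $k=1$ case the computation collapses outright to $w_{1,\mathrm p}=e^{-x/2}$, proving~(a); in the $k=2$ and $k=3$ cases the $\Ei$-containing pieces (and, for $k=3$, the products $\Ei\cdot\E1$ and the exponentially growing terms) cancel in pairs, leaving $2e^{x/2}\E1(x)+2x e^{-x/2}\MeijerG*{3}{1}{2}{3}{0,1}{0,0,0}{x}$ and $6e^{-x/2}\MeijerG*{3}{1}{2}{3}{0,1}{0,0,0}{x}$ respectively, which are precisely the right-hand sides of~(b) and~(c).

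To pin down the homogeneous components I would invoke the large-$x$ asymptotics of the Whittaker function, $W_{1,b}(x)\sim x e^{-x/2}\sum_{s\ge0}\tfrac{(b-1/2)_s(-b-1/2)_s}{s!}(-x)^{-s}$, valid uniformly in $b$ on compacts, whose leading coefficient ($s=0$) equals $1$ independently of $b$. Differentiating in $b$ and setting $b=\tfrac12$ shows $w_k(x)=\mathcal{O}(e^{-x/2})$ as $x\to+\infty$ --- in particular $w_k$ contains neither an exponentially growing term nor a term proportional to $x e^{-x/2}$ --- which forces the coefficients of $y_2$ and $y_1$ in the general solution to vanish. One checks (using $\E1(x)\sim e^{-x}/x$ and $\MeijerG*{3}{1}{2}{3}{0,1}{0,0,0}{x}\sim 1/x$ as $x\to+\infty$) that the particular solutions written above already have this property, so no homogeneous correction is needed. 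This establishes the three identities for $x>0$; the boundary case $x=0$ then follows by passing to the limit (with both sides understood in $[0,+\infty]$, since $e^x\E1(x)$ and the Meijer $G$-function diverge at the origin).

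The main obstacle I expect is bookkeeping rather than conceptual: carrying out the variation-of-parameters integrals for $k=2$ and especially $k=3$, and verifying that every term involving $\Ei$, every product of special functions, and every exponentially growing contribution cancels so that only the stated combination of $\E1$ and the Meijer $G$-function survives --- this requires care with the choice of integration constants, which one fixes in tandem with the asymptotic normalization of the previous paragraph. A secondary point needing attention is the rigorous justification that the $x\to+\infty$ asymptotics genuinely exclude both homogeneous solutions, which rests on the uniform-in-$b$ validity of the asymptotic expansion of $W_{1,b}$ and on the $b$-independence of its leading coefficient.
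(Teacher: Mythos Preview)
Your approach is correct and genuinely different from the paper's. The paper works in the Mellin domain: it starts from the transform identity
\[
\int_{0}^{\infty} x^{s-1}e^{-x/2}\,W_{1,b}(x)\,dx=\dfrac{\Gamma(1/2+b+s)\,\Gamma(1/2-b+s)}{\Gamma(s)},
\]
differentiates this in $b$, sets $b=1/2$, simplifies the right-hand side via the digamma/polygamma recurrences $\psi_m(z+1)-\psi_m(z)=(-1)^m m!\,z^{-m-1}$, and then inverts the resulting transforms (e.g.\ recognizing $\Gamma(s)\psi_1(s)$ as a multiplicative convolution). Your route, by contrast, stays entirely in the $x$-domain: you differentiate the Whittaker ODE in $b$ to produce an inhomogeneous hierarchy, solve each level by variation of parameters against the pair $y_1=xe^{-x/2}$, $y_2=-e^{x/2}+xe^{-x/2}\Ei(x)$, and then kill the homogeneous freedom by differentiating the large-$x$ asymptotic expansion in $b$ (the $b$-independence of the leading coefficient is exactly what rules out a $y_1$-component, and exponential decay rules out $y_2$).

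What each buys: the paper's Mellin argument is cleaner algebraically --- the right-hand side after differentiation is already a compact product of Gamma and polygamma values, and the inversion step is a single convolution lookup --- so there is essentially no cancellation to track. Your ODE approach is more elementary in that it avoids transform theory entirely, but the price is the bookkeeping you flag: for $k=2,3$ one must watch several $\Ei$-laden antiderivatives cancel, and the fixing of integration constants via termwise $b$-differentiation of the asymptotic series needs the uniformity you note. Both methods naturally surface the two ``named'' functions $\E1$ and $\MeijerG*{3}{1}{2}{3}{0,1}{0,0,0}{x}$ in the same places, so the final identities coincide.
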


Our proof of this lemma is offered in Appendix~\ref{sec:apndx:Whit-fcn-deriv-proof}, and it is based on repetitive differentiation of the Mellin integral transform (see, e.g.,~\citealt[pp.~1--10]{Oberhettinger:Book1974}) of the function $f(x)\triangleq e^{-\tfrac{x}{2}}\,W_{1,b}(x)$, $x\ge0$, with respect to the second index $b$. As an aside, we note that, apparently, the identities established in Lemma~\ref{lem:Whit-fcn-deriv} have not yet been derived in the theory of special functions (at least we were unable to find an existing reference for any one of them).

It is now straightforward to obtain the sought third-order expansion. Specifically, Lemma~\ref{lem:Whit-fcn-deriv} and the basic differentiation chain rule together lead to the following result.
\begin{theorem}\label{thm:Whit-func-expn}
For any $x\ge0$ it holds true that
\begin{align}\label{eq:Whit-func-expn}
\begin{split}
W_{1,\frac{\xi(\lambda)}{2}}&\left(\dfrac{2}{\mu^2 x}\right)
=
\dfrac{2}{\mu^2}\,e^{-\tfrac{1}{\mu^2 x}}\Biggl\{\dfrac{1}{x}+\lambda+\dfrac{2}{\mu^2}L\left(\dfrac{2}{\mu^2 x}\right)\lambda^2+\\
&\quad+
\left(\dfrac{2}{\mu^2}\right)^2\left[\MeijerG*{3}{1}{2}{3}{0,1}{0,0,0}{\dfrac{2}{\mu^2 x}}-2L\left(\dfrac{2}{\mu^2 x}\right)\right]\lambda^3\Biggr\}
+\mathcal{O}(\lambda^4),
\end{split}
\end{align}
where $\xi(\lambda)$ is as in~\eqref{eq:xi-def}, and
\begin{align}\label{eq:LwrBnd-def}
L(x)
&\triangleq
e^{x}\E1(x)-1+x\,\MeijerG*{3}{1}{2}{3}{0,1}{0,0,0}{x}.
\end{align}
\end{theorem}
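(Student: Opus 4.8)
The plan is to combine the Taylor expansion template~\eqref{eq:Whit-fcn-taylor-expn} (with $n=3$) with the substitution $x\mapsto 2/(\mu^2 x)$ and the chain rule that converts $\lambda$-derivatives into $b$-derivatives, where $b=\xi(\lambda)/2$. First I would record that $\xi(0)=1$, so $b|_{\lambda=0}=1/2$, which is precisely where Lemma~\ref{lem:Whit-fcn-deriv} evaluates its three derivatives. The zeroth-order term is $W_{1,1/2}(z)=z\,e^{-z/2}$, established in the text just before Lemma~\ref{lem:Whit-fcn-deriv} from Buchholz's identity $W_{a,a-1/2}(z)=z^a e^{-z/2}$; with $z=2/(\mu^2 x)$ this is exactly $(2/\mu^2)\,e^{-1/(\mu^2 x)}(1/x)$, matching the leading term in~\eqref{eq:Whit-func-expn}.

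Next I would carry out the chain-rule bookkeeping. From~\eqref{eq:xi-def} one has $\lambda=\tfrac{\mu^2}{8}(\xi^2-1)$, hence $\xi=\sqrt{1+8\lambda/\mu^2}$ and $b(\lambda)=\tfrac12\sqrt{1+8\lambda/\mu^2}$. A direct computation gives $b'(\lambda)=\tfrac{2}{\mu^2}(1+8\lambda/\mu^2)^{-1/2}$, so at $\lambda=0$ we get $b'(0)=2/\mu^2$, $b''(0)=-16/\mu^4$, and $b'''(0)$ some explicit multiple of $\mu^{-6}$; these are the only ingredients needed to convert $\partial_\lambda^k W_{1,b(\lambda)}(z)\big|_{\lambda=0}$ into a linear combination of $\partial_b^k W_{1,b}(z)\big|_{b=1/2}$ for $k=1,2,3$ via Fa\`a di Bruno. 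Substituting the three values from Lemma~\ref{lem:Whit-fcn-deriv} — namely $e^{-z/2}$, $2e^{-z/2}\{e^z\E1(z)+z\,G\}$, and $6e^{-z/2}\,G$, where $G$ abbreviates the Meijer $G$-value at $z=2/(\mu^2 x)$ — and collecting powers of $\lambda$ produces the coefficients in~\eqref{eq:Whit-func-expn}. The $\lambda^1$ coefficient is simply $b'(0)\cdot e^{-z/2}=(2/\mu^2)e^{-z/2}$, which after pulling out the common factor $(2/\mu^2)e^{-1/(\mu^2 x)}$ gives $\lambda$; the $\lambda^2$ and $\lambda^3$ coefficients involve the second and third $b$-derivatives plus the cross terms from $b''(0)$ and $b'''(0)$, and these are what assemble into $L(\cdot)$ as defined in~\eqref{eq:LwrBnd-def} (note $L$ is built from $e^z\E1(z)-1$ and $z\,G$, i.e.\ exactly the pieces appearing in parts~(\ref{lem:Whit-fcn-deriv:part-B}) and~(\ref{lem:Whit-fcn-deriv:part-C})).

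The only mildly delicate point is the arithmetic of matching the coefficients of $\lambda^2$ and $\lambda^3$: one must keep careful track of the factors $2/\mu^2$ coming from $b'(0)$, the $-16/\mu^4$ from $b''(0)$ weighted by $1/2!$, the $\mu^{-6}$ term from $b'''(0)$ weighted by $1/3!$, together with the $1/k!$ from the Taylor series~\eqref{eq:Whit-fcn-taylor-expn} itself, and then verify that the result collapses to the stated form with $L$. I expect this to be the main obstacle — not conceptually hard, but it is the step where a sign or factor-of-two slip is easy — and I would double-check it by confirming the $\lambda^1$ term gives exactly $\lambda$ (a clean sanity check) and by comparing the $\lambda^2$ coefficient against a low-order series expansion of $W_{1,\xi(\lambda)/2}$ in a computer algebra system. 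The error term $\mathcal{O}(\lambda^4)$ is inherited directly from~\eqref{eq:Whit-fcn-taylor-expn} since $W_{1,b}(x)$ is analytic in $b$ near $b=1/2$ for fixed $x>0$ and $b(\lambda)$ is analytic in $\lambda$ near $0$.
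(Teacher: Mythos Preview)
Your proposal is correct and follows exactly the route the paper takes: the paper states that the theorem is obtained by combining Lemma~\ref{lem:Whit-fcn-deriv} with ``the basic differentiation chain rule,'' which is precisely your Fa\`a di Bruno computation converting $\lambda$-derivatives at $\lambda=0$ into $b$-derivatives at $b=1/2$ via $b(\lambda)=\tfrac12\xi(\lambda)$. The paper gives no further detail beyond that one sentence, so your write-up is in fact more explicit than the original.
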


As the first application of the foregoing theorem, let us obtain an order-one, order-two, and order-three approximations of the dominant eigenvalue $\lambda$. To that end, it suffices to apply~\eqref{eq:Whit-func-expn} to approximate the Whittaker $W$ function sitting in the left-hand side of equation~\eqref{eq:eigval-eqn}. Specifically, in view of~\eqref{eq:Whit-func-expn}, approximated up to the first order, the equation is simply $1/A+\lambda=0$, so that
\begin{align*}
\lambda^{*}
&\equiv
\lambda_{A}^{*}
\triangleq
-\dfrac{1}{A}
\end{align*}
is $\lambda$'s first-order approximation.

Likewise, getting $\lambda$'s second-order approximation is a matter of solving the quadratic equation:
\begin{align*}
\dfrac{2}{\mu^2}\,L\left(\dfrac{2}{\mu^2 A}\right)\lambda^2+\lambda+\dfrac{1}{A}
&=
0,
\end{align*}
which may or may not have {\em real} solutions, depending on whether $A$ is large enough. Assuming it is, the quadratic equation has two distinct real roots, both negative, and the one closest to zero should be used as $\lambda$'s second-order approximation. Written explicitly, the second-order approximation is as follows:
\begin{align*}
\lambda^{**}
&
\equiv
\lambda_{A}^{**}
\triangleq
-\dfrac{\mu^2}{4}\,\dfrac{1-\sqrt{1-\dfrac{8}{\mu^2 A}\,L\left(\dfrac{2}{\mu^2 A}\right)}}{L\left(\dfrac{2}{\mu^2 A}\right)},
\end{align*}
and we reiterate that it requires the detection threshold $A>0$ to be sufficiently large.

Finally, the third-order approximation is determined by the cubic equation:
\begin{align*}
\dfrac{1}{A}+\lambda+\dfrac{2}{\mu^2}\,L\left(\dfrac{2}{\mu^2 A}\right)\lambda^2+
\left(\dfrac{2}{\mu^2}\right)^2\left[\MeijerG*{3}{1}{2}{3}{0,1}{0,0,0}{\dfrac{2}{\mu^2 A}}-2\,L\left(\dfrac{2}{\mu^2 A}\right)\right]\lambda^3
&=
0,
\end{align*}
which, again for sufficiently large $A$'s, has exactly one real root, because the coefficient in front of $\lambda^3$ is negative. It is that single real solution that, when exists, should be used as $\lambda$'s third-order approximation, i.e., as $\lambda^{***}\equiv\lambda_A^{***}$. While it is possible to express $\lambda^{***}$ explicitly, the formula is simply too ``bulky'' to state, and for this reason only we shall not present it.

The quality of each of the three approximations of $\lambda$ can be judged from Table~\ref{tab:lambda-approx}, which reports the ``exact'' (i.e., computed to within 400 decimal places of accuracy) value of $\lambda$, and the corresponding three approximations thereof for a handful of values of $A>0$, and assuming $\mu=1$. Specifically, the table provides only the first 12 decimal places of $-\lambda$, $-\lambda^{*}$, $-\lambda^{**}$, and of $-\lambda^{***}$, and the negation is done sheerly for convenience. It is evident from the table that the first-order approximation $\lambda^{*}$ performs somewhat poorly, unless the detection threshold is on the order of thousands, which is considered high in practice. The second-order approximation $\lambda^{**}$ performs much better, even when the detection threshold is moderate. The third-order approximation $\lambda^{***}$ is the most accurate of the three approximations, and is fairly close to the ``exact'' value of $\lambda$ for $A$'s on the order of tens, which is rather low from a practical standpoint. However, in terms of computational convenience, the ranking of the the three approximations is the exact opposite of their accuracy ranking.
\begin{table}
    \centering
    \caption{Dominant eigenvalue $\lambda$ and its first-, second- and, third-order approximations $-\lambda^{*}$, $-\lambda^{**}$, $-\lambda^{***}$ as functions of $A$ for $\mu=1$.}
    \begin{tabular}{rrrrr}
        \hline
            $A$ & $-\lambda$ & $-\lambda^{*}$ & $-\lambda^{**}$ & $-\lambda^{***}$ \\
        \hline
            $20$ & $0.058856148622$ & $0.05$ & $0.059819055496$ & $0.058817735494$ \\
            $30$ & $0.037786534271$ & $0.033333333333$ & $0.03811217223$ & $0.03777661428$ \\
            $40$ & $0.027727324417$ & $0.025$ & $0.027880519395$ & $0.027723505394$ \\
            $50$ & $0.02186160095$ & $0.02$ & $0.021947421685$ & $0.02185977578$ \\
            $100$ & $0.010563106075$ & $0.01$ & $0.010577520296$ & $0.010562921283$ \\
            $500$ & $0.002033066472$ & $0.002$ & $0.002033295282$ & $0.002033065611$ \\
            $1,000$ & $0.0010095172$ & $0.001$ & $0.001009554734$ & $0.001009517118$ \\
            $10,000$ & $0.000100139278$ & $0.0001$ & $0.000100139359$ & $0.000100139278$ \\
        \hline
    \end{tabular}
    \label{tab:lambda-approx}
\end{table}

We are now in a position to offer an order-one, order-two, and order-three ``large-$A$'' approximations of the quasi-stationary pdf $q_A(x)$ given by~\eqref{eq:QST-pdf-answer}. Let the approximations be denoted as $q_A^{*}(x)$, $q_A^{**}(x)$, and $q_A^{***}(x)$, respectively, which is in analogy to the notation adapted for the three approximations of $\lambda$. By combining Theorem~\ref{thm:Whit-func-expn} and~\eqref{eq:QST-pdf-answer} we obtain
\begin{align*}
\begin{split}
q_A^{*}(x)
&=
\dfrac{\dfrac{2}{\mu^2 x}\,e^{-\tfrac{2}{\mu^2 x}}}{e^{-\tfrac{1}{\mu^2 A}}\,W_{0,\tfrac{\xi(\lambda^{*})}{2}}\left(\dfrac{2}{\mu^2 A}\right)}\left\{\dfrac{1}{x}+\lambda^{*}\right\}\indicator{x\in[0,A]};\\
q_A^{**}(x)
&=
\dfrac{\dfrac{2}{\mu^2 x}\,e^{-\tfrac{2}{\mu^2 x}}}{e^{-\tfrac{1}{\mu^2 A}}\,W_{0,\tfrac{\xi(\lambda^{**})}{2}}\left(\dfrac{2}{\mu^2 A}\right)}\,\left\{\dfrac{1}{x}+\lambda^{**}+\dfrac{2}{\mu^2}L\left(\dfrac{2}{\mu^2 x}\right)\big(\lambda^{**}\big)^2\right\}\indicator{x\in[0,A]};\\
q_A^{***}(x)
&=
\dfrac{\dfrac{2}{\mu^2 x}\,e^{-\tfrac{2}{\mu^2 x}}}{e^{-\tfrac{1}{\mu^2 A}}\,W_{0,\tfrac{\xi(\lambda^{***})}{2}}\left(\dfrac{2}{\mu^2 A}\right)}\times\\
&\qquad\quad\times
\Biggl\{\dfrac{1}{x}+\lambda^{***}+\dfrac{2}{\mu^2}L\left(\dfrac{2}{\mu^2 x}\right)\big(\lambda^{***}\big)^2+\\
&\qquad\qquad\quad+
\left(\dfrac{2}{\mu^2}\right)^2\left[\MeijerG*{3}{1}{2}{3}{0,1}{0,0,0}{\dfrac{2}{\mu^2 x}}-2L\left(\dfrac{2}{\mu^2 x}\right)\right]\big(\lambda^{***}\big)^3\Biggr\}\,\indicator{x\in[0,A]},
\end{split}
\end{align*}
where $L(x)$ is defined in~\eqref{eq:LwrBnd-def}, and the corresponding three approximations of $\lambda$ are computed as described above.

To get an idea as to the accuracy of the obtained approximations, let us look at Figures~\ref{fig:QST-pdf-approx-comparison}. Specifically, Figure~\ref{fig:QST-pdf-approx} shows the ``exact'' pdf $q_A(x)$ and the three approximations thereof, all as functions of $x\in[0,A]$, assuming $A=20$ and $\mu=1$. Figure~\ref{fig:QST-pdf-approx-err} shows the corresponding absolute errors, i.e., the quantities $\abs{\,q_A(x)-q_A^{*}(x)}$, $\abs{\,q_A(x)-q_A^{**}(x)}$, and $\abs{\,q_A(x)-q_A^{***}(x)}$. The detection threshold is intentionally set so low, for otherwise the three approximations would be closer to the actual pdf, and the corresponding errors would be harder to notice. Observe from the figures that the first-order approximation is noticeably off. Recalling the numbers reported earlier in Table~\ref{tab:lambda-approx}, this is a direct consequence of the threshold being set so low. However, in spite of the low threshold, the second- and third-order approximations, whose corresponding curves appear to nearly coincide in the figures, are both fairly close to the actual ``exact'' pdf, across the entire range of values of $x\in[0,A]$. While the quality of all three approximations improves as the threshold gets bigger, the second- and third-order approximations each become practically indistinguishable from the exact pdf as soon as the threshold is in the hundreds, which is about the range often used in practice.
\begin{figure}[h!]
    \centering
    \begin{subfigure}{0.48\textwidth}
        \centering
        \includegraphics[width=\linewidth]{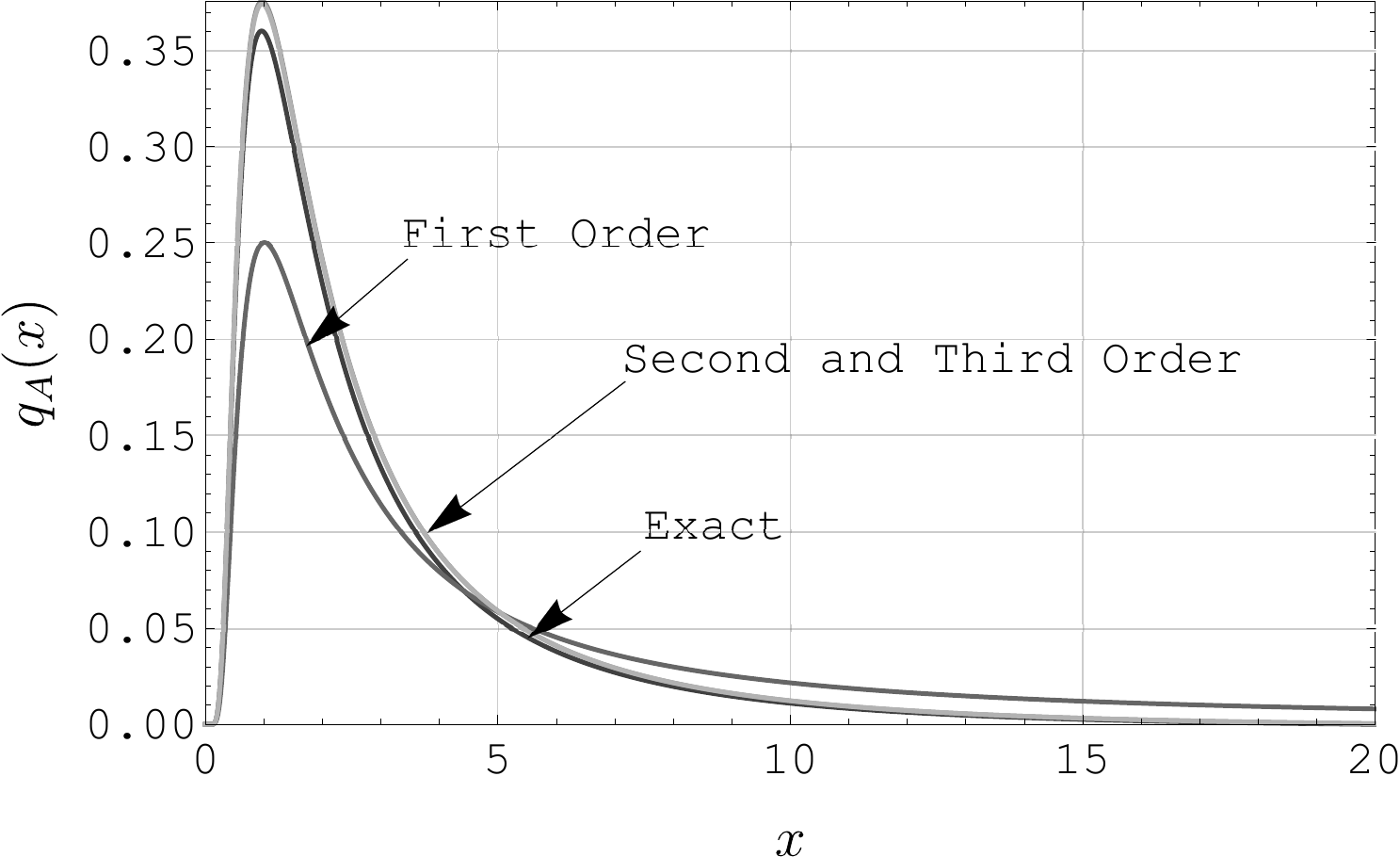}
        \caption{Exact pdf and its three approximations.}
        \label{fig:QST-pdf-approx}
    \end{subfigure}
    \hspace*{\fill}
    \begin{subfigure}{0.48\textwidth}
        \centering
        \includegraphics[width=\linewidth]{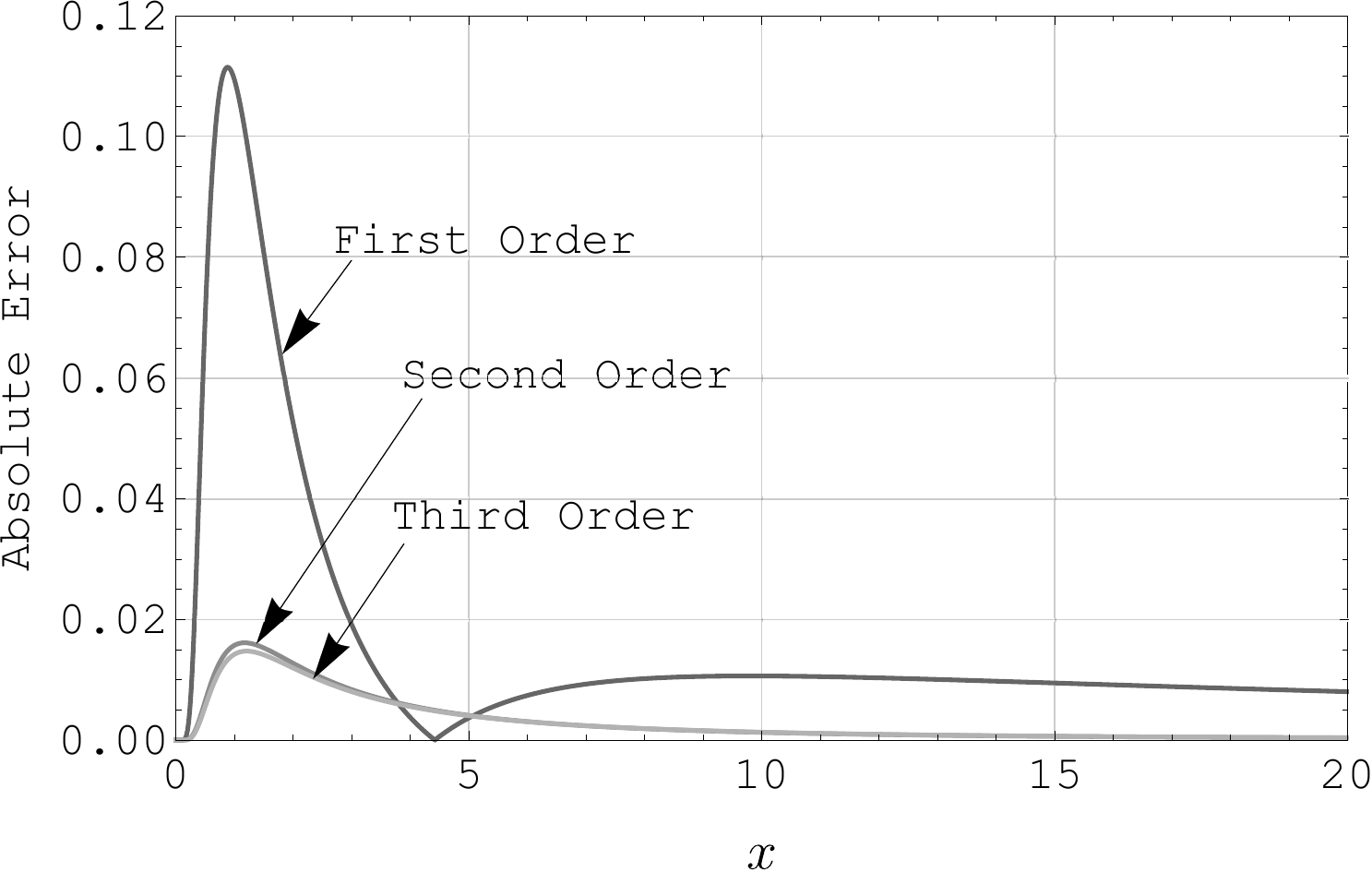}
        \caption{Corresponding absolute errors.}
        \label{fig:QST-pdf-approx-err}
    \end{subfigure}
    \caption{Quasi-stationary distribution's pdf $q_A(x)$ and its three approximations $q_A^{*}(x)$, $q_A^{**}(x)$, and $q_A^{***}(x)$ as functions of $x\in[0,A]$ for $A=20$ and $\mu=1$.}
    \label{fig:QST-pdf-approx-comparison}
\end{figure}

We conclude this subsection with a remark concerning the function $L(x)$ introduced in~\eqref{eq:LwrBnd-def}. This function plays an important role in the minimax theory of quickest change-point detection, since it is the key ingredient of the universal lowerbound (obtained, e.g., in~\citealt[Lemma~2.2]{Feinberg+Shiryaev:SD2006}) on the still-unknown optimal value of the detection delay penalty-function introduced by Pollak~\citeyearpar{Pollak:AS85}; for the discrete-time setting of the problem, the equivalent of this lowerbound was introduced by~\cite{Moustakides+etal:SS11}. Therefore, function $L(x)$ is essential in assessing the efficiency of a detection procedure, and, in particular, possibly proving that the procedure of interest is either exactly or nearly Pollak-minimax. Precisely this approach was employed by~\cite{Burnaev+etal:TPA2009} to prove that the randomized version of the (Generalized) Shiryaev--Roberts procedure is asymptotically order-two Pollak-minimax. The main apparatus used throughout our investigation, i.e., the Whittaker $W$ function and the calculus associated with it, may help improve the second-order minimaxity established by~\cite{Burnaev+etal:TPA2009}. To that end, observe that part (c) of Lemma~\ref{lem:Whit-fcn-deriv} provides a link between function $L(x)$ and the Whittaker $W$ function. Consequently, the lowerbound, which is effectively defined in~\eqref{eq:LwrBnd-def}, may be reexpressed as the second derivative of the corresponding Whittaker function (the derivative is with respect to the second index). This alternative form of the lowerbound might offer a new insight into the problem, and a further investigation of this appears to be worthwhile.

\section{Concluding Remarks}
\label{sec:conclusion}

The focus of this work was on the quasi-stationary distribution of the (Generalized) Shiryaev--Roberts (GSR) diffusion from quickest change-point detection theory. We obtained exact formulae for the distribution's cdf and pdf; the formulae are valid for any threshold that the GSR diffusion is stopped at. We also found the distribution's entire moment series, and proved the distribution to be unimodal for any threshold. More importantly, we offered an order-three large-threshold asymptotic approximation of the quasi-stationary distribution. The derivation of the approximation required establishing new identities on certain special functions. This work's findings are of direct application to quickest change-point detection, and we expect to make the transition in a subsequent paper.


\appendix

\section{Proof of Formula 3.21} 
\label{sec:apndx:MeijerG-func-spcl-case-proof}

We first note that the identity
\begin{align*}
\int_{x}^{+\infty}e^{y}\E1(y)\,\dfrac{dy}{y}
&=
\int_{0}^{+\infty} e^{-xy}\log(1+y)\,\dfrac{dy}{y},\; x>0,
\end{align*}
where $\E1(x)$ is the exponential integral defined in~\eqref{eq:E1-func-def}, has already been established, e.g., by~\cite[pp.~453--454]{Feinberg+Shiryaev:SD2006}. It therefore remains to show only that
\begin{align*}
\MeijerG*{3}{1}{2}{3}{0,1}{0,0,0}{x}
&=
\int_{0}^{+\infty} e^{-xy}\log(1+y)\,\dfrac{dy}{y},\; x>0,
\end{align*}
where $G$ denotes the Meijer $G$-function whose general definition is given by~\eqref{eq:MeijerG-func-def}. To that end, the idea is to use~\cite[Identity~8.4.6.5,~p.~537]{Prudnikov+etal:Book1990} according to which
\begin{align*}
\log(1+x)
&=
\MeijerG*{1}{2}{2}{2}{1,1}{1,0}{x},
\end{align*}
so that
\begin{align*}
\int_{0}^{+\infty} e^{-xy}\log(1+y)\,\dfrac{dy}{y}
&=
\int_{0}^{+\infty} e^{-xy}\MeijerG*{1}{2}{2}{2}{1,1}{1,0}{y}\dfrac{dy}{y},
\end{align*}
whence, in view of the definite integral
\begin{align*}
\begin{split}
\int_{0}^{+\infty} e^{-\beta x}&\MeijerG*{m}{n}{p}{q}{a_1,\ldots,a_p}{b_1,\ldots,b_q}{\alpha x}\dfrac{dx}{x^c}
=
\beta^{c-1}\MeijerG*{m}{n+1}{p+1}{q}{c,a_1,\ldots,a_p}{b_1,\ldots,b_q}{\frac{\alpha}{\beta}},\\
&\qquad\qquad\text{provided $p+q<2(m+n)$,\;$\abs{\arg\alpha}<(2m+2n-p-q)\frac{\pi}{2}$},\\
&\qquad\qquad\qquad\text{\;$\abs{\arg\beta}<\frac{\pi}{2}$, and $\Re(b_j-c)>-1$,\; $j=1,\ldots,m$},
\end{split}
\end{align*}
as given, e.g., by~\cite[Identity~7.813.1,~p.~853]{Gradshteyn+Ryzhik:Book2007}, it follows that
\begin{align*}
\int_{0}^{+\infty} e^{-xy}\log(1+y)\,\dfrac{dy}{y}
&=
\MeijerG*{1}{3}{3}{2}{1,1,1}{1,0}{\frac{1}{x}},
\end{align*}
and to end the proof, it suffices to recall one of the basic properties of the Meijer $G$-function, viz. one asserting that
\begin{align*}
\MeijerG*{m}{n}{p}{q}{a_1,\ldots,a_p}{b_1,\ldots,b_q}{z}
&=
\MeijerG*{n}{m}{q}{p}{1-b_1,\ldots,1-b_q}{1-a_1,\ldots,1-a_p}{\frac{1}{z}},
\end{align*}
as given, e.g., by~\cite[Identity~8.2.2.14,~p.~521]{Prudnikov+etal:Book1990}, and also observe that, by definition~\eqref{eq:MeijerG-func-def}, any Meijer $G$-function is symmetric with respect to $a_1,\ldots,a_n$, with respect to $a_{n+1},\ldots,a_{p}$, with respect to $b_1,\ldots,b_{m}$, and with respect to $b_{m+1},\ldots,b_{q}$---each group of parameters treated separately.

\section{Proof of Lemma 3.4} 
\label{sec:apndx:Whit-fcn-deriv-proof}

The centerpiece of our proof is the identity
\begin{align*}
\int_{0}^{+\infty}
x^{s-1} e^{-\tfrac{cx}{2}}\,W_{a,b}(cx)\,dx
&=
\dfrac{\Gamma(1/2+b+s)\,\Gamma(1/2-b+s)}{c^{s}\Gamma(1-a+s)},
\;
s>-1/2\pm\Re(b);
\end{align*}
cf., e.g.,~\cite[Identity~13.52,~p.~146]{Oberhettinger:Book1974}. This identity is the Mellin transform of the function $f(x)\triangleq e^{-\tfrac{cx}{2}}\,W_{a,b}(cx)$ with $a$, $b$, and $c$ fixed, and $x\ge0$. We are interested in the case when $c=a=1$, and when $b$ is either purely real and such that $0\le b\le 1/2$, or purely imaginary (i.e., $\Re(b)=0$). In this case the above identity reduces to
\begin{align*}
\int_{0}^{+\infty}
x^{s-1} e^{-\tfrac{x}{2}}\,W_{1,b}(x)\,dx
&=
\dfrac{\Gamma(1/2+b+s)\,\Gamma(1/2-b+s)}{\Gamma(s)},\; s>0,
\end{align*}
and the entire proof is based on successive differentiation of the foregoing with respect to $b$, followed by the evaluation of the result at $b=1/2$.

To show part~(\ref{lem:Whit-fcn-deriv:part-A}), observe first that a differentiation of the above integral identity through with respect to $b$ gives
\begin{align}\label{eq:Whit-func-Mellin-1st-deriv}
\begin{split}
\int_{0}^{+\infty}
x^{s-1} e^{-\tfrac{x}{2}}&\left[\dfrac{\partial}{\partial b}W_{1,b}(x)\right]dx
=
\dfrac{\Gamma(1/2+b+s)\,\Gamma(1/2-b+s)}{\Gamma(s)}\times\\
&\qquad\quad\times
\big[\psi_0(1/2+b+s)-\psi_0(1/2-b+s)\big],
\;
s>0,
\end{split}
\end{align}
where here and onward $\psi_0(z)$ denotes the digamma function defined as $\psi_0(z)\triangleq d\log\Gamma(z)/dz$; for the basic background on the digamma function, see, e.g.,~\cite[Section~6.3]{Abramowitz+Stegun:Handbook1964}. The substitution $b=1/2$ turns~\eqref{eq:Whit-func-Mellin-1st-deriv} into
\begin{align*}
\int_{0}^{+\infty}
x^{s-1} e^{-\tfrac{x}{2}}\left.\left[\dfrac{\partial}{\partial b}W_{1,b}(x)\right]\right|_{b=1/2}dx
&=
\Gamma(s),
\; s>0,
\end{align*}
because $\psi_0(z+1)-\psi_0(z)=1/z$, as given, e.g., by~\cite[Identity~6.3.5,~p.~258]{Abramowitz+Stegun:Handbook1964}. Finally, since $\Gamma(s)$ for $s>0$, is the Mellin transform of the function $f(x)\triangleq e^{-x}$, $x\ge0$, one can deduce that
\begin{align*}
e^{-\tfrac{x}{2}}\left.\left[\dfrac{\partial}{\partial b}W_{1,b}(x)\right]\right|_{b=1/2}
&=
e^{-x},\; x\ge0,
\end{align*}
whence the desired result.

Next, by differentiating~\eqref{eq:Whit-func-Mellin-1st-deriv} through with respect to $b$ we obtain
\begin{align}\label{eq:Whit-func-Mellin-2nd-deriv}
\begin{split}
\int_{0}^{+\infty}
x^{s-1} e^{-\tfrac{x}{2}}&\left[\dfrac{\partial^2}{\partial b^2}W_{1,b}(x)\right]dx
=
\dfrac{\Gamma(1/2+b+s)\,\Gamma(1/2-b+s)}{\Gamma(s)}\times\\
&\qquad\times
\Biggl\{\big[\psi_0(1/2+b+s)-\psi_0(1/2-b+s)\big]^2+\\
&\qquad\qquad+\big[\psi_1(1/2+b+s)+\psi_1(1/2-b+s)\big]\Biggr\},
\;
s>0,
\end{split}
\end{align}
where here and onward $\psi_{1}(z)\triangleq d\psi_{0}(z)/dz$, i.e., $\psi_{1}(z)$ is the trigamma function, which is a particular case of the more general polygamma function $\psi_{m}(z)\triangleq d^m\psi_{0}(z)/d z^{m}$, $m=0,1,2,\ldots$; for the basic background on the polygamma function, see, e.g.,~\cite[Section~6.4]{Abramowitz+Stegun:Handbook1964}. The substitution $b=1/2$ turns~\eqref{eq:Whit-func-Mellin-2nd-deriv} into
\begin{align}\label{eq:Whit-func-Mellin-2nd-deriv-at-b=one-half}
\int_{0}^{+\infty}
x^{s-1} e^{-\tfrac{x}{2}}\left.\left[\dfrac{\partial^2}{\partial b^2}W_{1,b}(x)\right]\right|_{b=1/2}dx
&=
2\,s\,\Gamma(s)\,\psi_{1}(s),\; s>0,
\end{align}
and the derivation exploits the recurrence $\psi_0(z+1)-\psi_0(z)=1/z$ already used above, and the recurrence $\psi_1(z+1)-\psi_1(z)=-1/z^2$, which is a special case of~\cite[Indetity~6.4.6,~p.~260]{Abramowitz+Stegun:Handbook1964} stating that
\begin{align}\label{eq:polygamma-func-recurrence}
\psi_{m}(z+1)-\psi_{m}(z)
&=
(-1)^m\dfrac{m!}{z^{m+1}},\; m=0,1,2,\ldots.
\end{align}

To ``undo'' the Mellin transform~\eqref{eq:Whit-func-Mellin-2nd-deriv-at-b=one-half}, recall that for any two functions $g_1(x)$ and $g_2(x)$ whose Mellin transforms are $G_1(s)$ and $G_2(s)$, respectively, the Mellin transform of their {\em multiplicative} convolution
\begin{align*}
f(x)
&=
\int_{0}^{+\infty} g_1\left(\dfrac{x}{t}\right)g_2(t)\,\dfrac{dt}{t}
\end{align*}
is the product $G_1(s)\,G_2(s)$. See, e.g.,~\cite[Identity~1.14,~p.~12]{Oberhettinger:Book1974}. Since by~\cite[Identity~4.13,~p.~35]{Oberhettinger:Book1974} the Mellin transform of the function
\begin{align*}
g_2(x)
&=
\dfrac{\log x}{x-1}\,\indicator{0<x<1}
\end{align*}
is precisely $\psi_1(s)$ with $s>0$, i.e., the trigamma function, and because by~\cite[Identity~1.10,~p.~12]{Oberhettinger:Book1974} the Mellin transform of the function $g_1(x)=2\,x\,e^{-x}$, $x\ge0$, is $2\,s\,\Gamma(s)$, also with $s>0$, it follows that
\begin{align*}
\left.\left[\dfrac{\partial^2}{\partial b^2}W_{1,b}(x)\right]\right|_{b=1/2}
&=
2\,e^{\tfrac{x}{2}}\int_{0}^{1}\dfrac{x}{t}\,e^{-\tfrac{x}{t}}\dfrac{1}{t-1}\log t\,\dfrac{dt}{t},
\;
x\ge0.
\end{align*}

The change of integration variables from $t$ to $y\equiv y(t)\triangleq 1/t-1$ allows to rewrite the foregoing as follows:
\begin{align*}
\left.\left[\dfrac{\partial^2}{\partial b^2}W_{1,b}(x)\right]\right|_{b=1/2}
&=
2\,x\,e^{-x}\Biggl\{\int_{0}^{+\infty} e^{-xy}\log(1+y)\,dy+\\
&\qquad\qquad\qquad
+\int_{0}^{+\infty} e^{-xy}\log(1+y)\,\dfrac{dy}{y}\Biggr\},\; x\ge0,
\end{align*}
which, in view of~\eqref{eq:E1-func-def} and~\eqref{eq:MeijerG-func-spcl-case}, can be seen to be precisely part~(\ref{lem:Whit-fcn-deriv:part-B}).

Proving part~(\ref{lem:Whit-fcn-deriv:part-C}) involves exactly the same steps. By differentiating~\eqref{eq:Whit-func-Mellin-2nd-deriv} through with respect to $b$ and then evaluating the result at $b=1/2$ one obtains
\begin{align*}
\int_{0}^{+\infty}x^{s-1}e^{-\tfrac{x}{2}}\left.\left[\dfrac{\partial^3}{\partial b^3}W_{1,b}(x)\right]\right|_{b=1/2}dx
&=
6\,\Gamma(s)\,\psi_1(s),\; s>0,
\end{align*}
and, as before, the key role in the derivation is played by the recurrence~\eqref{eq:polygamma-func-recurrence} with $m$ equal to $0$, $1$, and $2$. This implies that
\begin{align*}
\left.\left[\dfrac{\partial^3}{\partial b^3}W_{1,b}(x)\right]\right|_{b=1/2}
&=
6\,e^{\tfrac{x}{2}}\int_{0}^{1} e^{-\tfrac{x}{t}}\dfrac{1}{t-1}\log t\,\dfrac{dt}{t},\;
x\ge0,
\end{align*}
which, using the substitution $y\equiv y(t)\triangleq 1/t-1$, can be rewritten as
\begin{align*}
\left.\left[\dfrac{\partial^3}{\partial b^3}W_{1,b}(x)\right]\right|_{b=1/2}
&=
6\,e^{-\tfrac{x}{2}}\int_{0}^{+\infty} e^{-xy}\log(1+y)\,\dfrac{dy}{y},\; x\ge0,
\end{align*}
and this, in view of~\eqref{eq:MeijerG-func-spcl-case}, is precisely part~(\ref{lem:Whit-fcn-deriv:part-C}).
\vspace*{1em}
\section*{Acknowledgement}
The author is grateful to the Editor-in-Chief, Nitis Mukhopadhyay (University of Connecticut--Storrs) and to the anonymous referee for the constructive feedback provided on the first draft of the paper that helped improve the quality of the manuscript and shape its current form.

The author's effort was partially supported by the Simons Foundation via a Collaboration Grant in Mathematics under Award \#\,304574.


\end{document}